\newtheorem{theorem}{Theorem}[section]
\newtheorem{lem}{Lemma}[section]
\newtheorem{defi}{Definition}[section]
\newtheorem*{maintheorem}{Main Theorem}
\newcommand{\PSLC}{{\rm PSL}_2\mathbb{C}}
\newcommand{\SLC}{{\rm SL}_2\mathbb{C}}
\title{Peripheral birationality for 3-dimensional convex co-compact $\PSLC$ varieties}
\author{Ian Agol, Franco Vargas Pallete}
\email{}
\thanks{I. Agol's research supported by a Simons Investigator Grant and the Mathematical Sciences Research Institute. F. Vargas Pallete's research was supported by NSF grant DMS-2001997.}
\address{}
\begin{document}

\maketitle

\begin{abstract}
    Let $M$ be a hyperbolizable $3$-manifold with boundary, and let $\chi_0(M)$ be a component of the $\PSLC$-character variety of $M$ that contains the convex co-compact characters. We show that the peripheral map $i_*:\chi_0(M)\rightarrow\chi(\partial M)$ to the character variety of $\partial M$ is a birational isomorphism with its image, and in particular is generically a one-to-one map. This generalizes work of Dunfield (one cusped hyperbolic $3$-manifolds) and Klaff-Tillmann (finite volume hyperbolic $3$-manifolds). We use the Bonahon-Schl\"afli formula and volume rigidity of discrete co-compact representations.
\end{abstract}

\section{Introduction}\label{sec:introduction}
Given a connnected 3-manifold $M$ with boundary and a representation $\rho:\pi_1(M)\to G$, $G$ a Lie group (which will usually be $PSL_2(\mathbb{C})$), it is natural to ask to what extent $\rho$ is determined by $\rho_{|\pi_1(\partial M)}$? If the interior of $M$ admits a convex cocompact hyperbolic metric with holonomy $\rho$, then it is known that $\rho_{|\pi_1(\partial M)}$ determines $\rho$ - in fact, the conformal structure of the boundary determines the hyperbolic metric by results of Ahlfors-Bers. For a manifold $M$ with one cusp, Dunfield \cite{Dunfield99} proved that a main component of the character variety (the Zariski component containing a discrete faithful representation) maps birationally to a factor of the $A$-polynomial. In particular, for a Zariski open subset of this component, the represenation will be determined by its restriction to the boundary. This was extended by Klaff and Tillmann \cite{KlaffTillmann} to the multiple cusped case (see also Francaviglia \cite{Francaviglia}, Francaviglia-Klaff \cite{FrancavigliaKlaff}). 
However, in general there may be families of representations which are constant on $\pi_1\partial M$. 

We generalize the result of Dunfield to convex cocompact hyperbolic manifolds. In this case, the space of discrete faithful representations has dimension determined by Teichmuller space, and this is half the dimension of the character variety of the boundary. Because of the dimension agreeing, we know that the map is generically finite-to one. We show that the map from the main component of the character variety to its image in the character variety of the boundary is a birational map, so is actually generically one-to-one. 

The tools that Dunfield uses are the Schl\"afli formula and representation rigidity of Gromov which we adapt to the infinite volume case. An obvious issue with geometrically finite manifolds of negative euler characteristic is that the volume is infinite. There is the notion of convex core volume, but this is only well-defined for discrete faithful representations. Rather than try to extend this to non-discrete representations, we choose pleated surfaces with the same bending locus to define a notion of volume which depends only on the restriction of the representation to the boundary. Although such volumes require some choices, and are not defined everywhere, nevertheless we can show that there is a notion of volume determined by the restriction to the boundary, making use of a version of Schl\"afli's formula due to Bonahon \cite{Bonahon98}. The other tool, representation rigidity, then is proved at countably infinitely many representations for which there is an extension to a finite-volume hyperbolic orbifold representation. One could probably also extend the proof of volume rigidity to all representations whose restriction to the boundary is discrete and faithful, but rather than prove such a result, we decided to use what tools were already at hand. Once we have extended these two tools, the proof proceeds similarly to that of Dunfield. Suppose that a geometrically finite representation and another representation have the same peripheral holonomy. Then the volumes of the representations are the same. Hence by volume rigidity, they are both discrete, and hence conjugate.

\subsection{Examples} 
Here are some examples for which the result can be proved more easily. Suppose one has a compression body. Then the fundamental group is a free product of surface groups. There is a compressible boundary component which surjects the fundamental group, and hence the representation variety embeds in the representation variety of this boundary component. 

A slightly less non-trivial example is that of a book-of-I-bundles. In this case, when the representation is faithful, it determines the representations of each page. For each binding, there will be boundary components overlapping the pages meeting that binding. The representations of these boundary components determine how to ``glue" together the representations of adjacent pages. Hence the boundary holonomy determines the full representation generically (since faithful representations are generic in the main component). 

\subsection{Outline}

This paper is organized as follows. Section \ref{sec:background} explains the main tools we will use. Subsection \ref{subsec:character} reviews $\PSLC$ character varieties. Subsection \ref{subsec:pleated} constructs the pleated surfaces we will use to compute the volume associate to a character. Subsections \ref{subsec:bendingcocycle}, \ref{subsec:volumevariation} deal with the definition of bending cocycle and its implementation in the Bonahon-Schl\"afli formula for change of volume. Section \ref{sec:mainproof} uses all these tools and volume rigidity of co-compact characters to prove our main result (stated below). We finish with some remarks where we discuss the surjectivity and finite-to-one nature of the peripheral map, and also how to generalize the result for geometrically finite characters.

\begin{maintheorem}
Let $M$ be a hyperbolizable compact 3-manifold with boundary. Let $\chi_0(M)$ be the connected component of the discrete and faithful representations. Then the map $i_*:\chi_0(M)\rightarrow \chi(\partial M)$ is a birational isomorphism onto its image.
\end{maintheorem}

\section*{Acknowledgments} Both authors are thankful to F. Bonahon for its interest and helpful comments.

\section{Background}\label{sec:background}

\subsection{Character variety}\label{subsec:character}

For a comprehensive study of character varieties of $3$-manifold groups, we refer the reader to \cite{CullerShalen83} for $\SLC$ characters and \cite{BoyerZhang98} for $\PSLC$ characters. Here we present a factual recollection of the relevant definitions and results from \cite{BoyerZhang98}.

Let $G$ be a finitely generated group. A $\PSLC$-representation is a homomorphism $\rho:G\rightarrow \PSLC$. The $\PSLC$ \textit{representation variety} $R(G)$ is defined by 
\[R(G) = \lbrace \rho\,|\,\rho:G\rightarrow\PSLC \text{  homomorphism} \rbrace
\]

In order to discuss the algebraic structure on $R(G)$, we recall some definitions and properties from algebraic geometry (see \cite{Harris92}). An \textit{affine algebraic set} in $\mathbb{C}^m$ is the zero locus of a finite collection of polynomials with complex coefficients. Given $U, V$ affine algebraic sets, we say that $f:U\rightarrow V$ is a \textit{regular map} if $f:U\rightarrow \mathbb{C}^m\supseteq V$ has polynomial coordinates and $f(U)\subseteq V$. Regular maps are in bijection (by taking the pull-back) with homomorphism between the coordinate rings of regular functions $A[V]\rightarrow A[U]$. 

We say that two affine algebraic sets $U\subseteq \mathbb{C}^m ,V\subseteq \mathbb{C}^n$ are \text{isomorphic} if there exists regular maps $f:U\rightarrow V,\, g:V\rightarrow U$ with polynomial coordinates so that $g\circ f = id_U, f\circ g= id_V$. An affine algebraic set is called \textit{irreducible} if it is the zero locus of a finite collection of polynomials that generate a prime ideal. Every affine algebraic set $U$ is canonically decomposed (respecting isomorphisms) as the finite union of affine algebraic varieties, each of which is known as an \textit{irreducible component} of $U$.

The adjoint representation $\PSLC \rightarrow {\rm Aut} (sl_2\mathbb{C})$ realizes $\PSLC$ as an affine algebraic set. Hence if we take $\lbrace g_1, \ldots, g_n\rbrace$ a collection of generators of $G$, the map $R(G) \rightarrow (\PSLC)^n$, $\rho\mapsto (\rho(g_1),\ldots,\rho(g_n))$ identifies $R(G)$ with an affine algebraic set. Since a different choice of generators will give an isomorphic affine algebraic set, we identify $R(G)$ with an isomorphism class of affine algebraic sets.

We say two representations $\rho_1, \rho_2\in R(G)$ are \textit{conjugated} if there exists $g\in {\rm GL}_2\mathbb{C}$ so that $\rho_2 = g^{-1}\rho_1 g$. This defines an equivalence relation in $R(G)$, where any pair of equivalent representations belong to the same irreducible component of $R(G)$. 

We say that a representation $\rho\in R(G)$ is \textit{irreducible} if the only subspaces of $\mathbb{C}^2$ invariant by $\rho(G)$ are $\lbrace0\rbrace$ and $\mathbb{C}^2$, otherwise we say that $\rho$ is \textit{reducible}. Irreducibility is preserved by conjugation. Moreover, the set of reducible representations is a subvariety of $R(G)$. Hence by an irreducible component of $R(G)$ of irreducible representations we refer to an irreducible component of $R(G)$ (in the algebro-geometric sense) so that the subvariety of reducible representations is proper.

We define the \textit{character variety} $\chi(G)$ as the algebro-geometric quotient $ R(G)/\PSLC$ by considering the affine algebraic set matching the subring $A^{\PSLC}[R(G)]\subseteq A[R(G)]$ of regular functions invariant by the natural $\PSLC$ action. Hence we have a surjective regular map $\chi:R(G)\rightarrow\chi(G)$ that is constant in the $\PSLC$ orbits, and for any $g\in G$ we have that the map $\tau_g: \chi(G) \rightarrow \mathbb{C}$, $\chi_\rho\mapsto (tr(\rho(g)))^2$ is a well-defined regular map.

We introduce some definitions for rational maps between affine algebraic varieties.

\begin{defi}
Let $X\subseteq \mathbb{C}^m, Y\subseteq \mathbb{C}^n$ be affine algebraic varieties. We say that $\phi=(\phi_1,\ldots,\phi_n)$ is a rational map from $X$ to $Y$ if each $\phi_1,\ldots,\phi_n$ is given by a rational function in $X$, and whenever defined, $\phi(x)\in Y$. We denote this by $\phi:X\dashrightarrow Y$. We say that $\phi:X\dashrightarrow Y$ is dominant if, given $U\subseteq X$ Zariski open set where $\phi$ is defined, then the Zariski closure of $\phi(U)$ is equal to $Y$. We say that $\phi$ is birational if there exists inverse $\psi:Y\dashrightarrow X$. In such case we say that $X$ and $Y$ are birationally isomorphic.
\end{defi}

Now we consider the convex co-compact $\PSLC$ representations of a $3$-manifold $M$ with boundary. This representations are irreducible and it is known (see for instance \cite[Section 6]{DumasKent}) that their Zariski closure in $R(\pi_1(M))$ is an irreducible component $R_0$, so their characters have Zariski closure an irreducible component of $\chi(M)$. We denote this component by $\chi_0(M)$. By fixing paths from a basepoint to the each component of the boundary $\partial M = \Sigma_1\sqcup\ldots\sqcup\Sigma_k$ we have the regular maps between representation varieties induced by the inclusion $\partial M \hookrightarrow M$

\[{i_\ell}_*:R(\pi_1(M)) \rightarrow R(\pi_1(\Sigma_\ell)), 1\leq \ell\leq k.
\]
Hence by taking the pullback we have ring homomorphisms 
\[\varphi_\ell:A[R(\pi_1(\partial M))] \rightarrow A[R(\pi_1(\Sigma_\ell))].
\]
It is not hard to see that $\varphi_\ell$ is equivariant with respect to the natural $\PSLC$ actions, so in particular satisfies $\varphi_\ell(A^{\PSLC}[R(\pi_1(\Sigma_\ell))])\subseteq A^{\PSLC}[R(\pi_1(M))]$. This means that we have regular maps (which we also denote by ${i_\ell}_*$) ${i_\ell}_*:\chi(\pi_1(M))\rightarrow \chi(\pi_1(\Sigma_\ell))$ that makes the following diagram commute.

\[ \begin{tikzcd}
R(\pi_1(M)) \arrow{r}{{i_\ell}_*} \arrow[swap]{d}{\chi} & R(\pi_1(\Sigma_\ell)) \arrow{d}{\chi} \\%
\chi(\pi_1(M)) \arrow{r}{{i_\ell}_*}& \chi(\pi_1(\Sigma_\ell))
\end{tikzcd}
\]

We should observe that while each ${i_\ell}_*$ at the level of representation varieties depended on the choice of basepoint and paths in $M$, the maps ${i_\ell}_*$ at the level of character varieties are well-defined, since a change of paths conjugates representations. We define then the \textit{peripheral map} of $\chi(M)$ as the regular map

\[i_*=({i_1}_*,\ldots,{i_k}_*):\chi(\pi_1(M)) \rightarrow \chi(\partial M):= \chi(\pi_1(\Sigma_1))\times\ldots\times\chi(\pi_1(\Sigma_k))
\]



By Ahlfors-Bers \cite{AhlforsBers60}  we have that $i_*$ evaluated at a convex co-compact $\PSLC$ characters of $\pi_1(M)$ gives a collection of convex co-compact $\PSLC$ characters for each $\Sigma_\ell$. These collection of characters have closure included in the convex co-compact irreducible component of $\chi(\partial M)$, which we denote by $\chi_0(\partial M)$. Observe that the map $i_*: \chi_0(M)\rightarrow  \overline{i_*(\chi_0(M))}\subset \chi_0(\partial M)$ is a diffeomorphism between convex co-compact characters and their image. It is well-known (see for instance \cite[Chapter I.3]{Hartshorne}) that ${\rm dim}_{\mathbb{C}}(i_*(\chi_0(M))) \leq {\rm dim}_{\mathbb{C}}(\chi_0(M))$, as dimension corresponds to the Krull dimension of local rings, and since $i_*$ has dense image then the induced map at the level of local rings is injective. Since dimension agrees with the dimension of smooth points (such as the convex co-compact characters), we know that the map

\[i_*:\chi_0(M) \rightarrow \overline{i_*(\chi_0(M))}\subset \chi_0(\partial M)
\]
is a dominant (i.e. dense image) regular map between algebraic varieties of the same dimension.

\subsection{Pleated surfaces}\label{subsec:pleated}

Let $S$ be a genus $g$ surface. Let us fix an auxiliary hyperbolic metric $m_0$ and $\Gamma=\lbrace\gamma_i \rbrace_{1\leq i\leq 3g-3}$ a maximal collection of oriented, disjoint, essential, pairwise non-isotopic, simple closed curves (i.e. an oriented pants decomposition). Take $\lambda_0$ to be the maximal lamination extension of $\Gamma$ so that any leaf of $\lambda_0$ that accumulates at $\gamma_i\in\Gamma$ does it in the direction of the orientation (see Figure \ref{fig:lamination}). Equivalently, the orientation for each $\gamma_i\in\Gamma$ gives a preferred endpoint for any lift $\tilde{\gamma_i}$ of $\gamma_i$ in the universal cover. Then any ideal triangle in the lift $(\tilde{S},\tilde{\lambda_0})$ uses only preferred endpoints as ideal vertices.

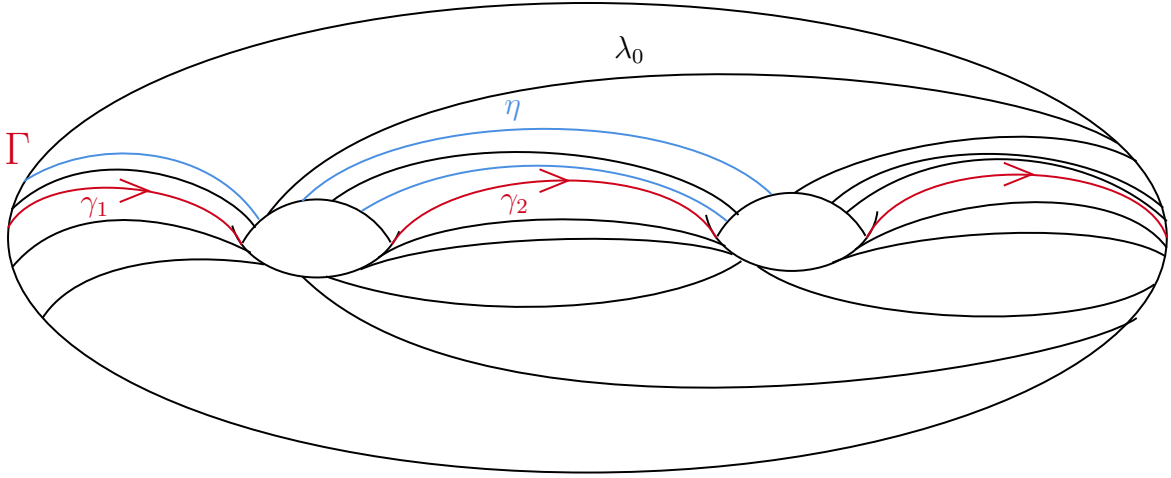
\begin{figure}[hbt!]
    \centering

\tikzset{every picture/.style={line width=0.75pt}} 

\begin{tikzpicture}[x=0.75pt,y=0.75pt,yscale=-1,xscale=1]

\draw   (7.59,123.5) .. controls (7.59,58.05) and (138.42,5) .. (299.8,5) .. controls (461.18,5) and (592,58.05) .. (592,123.5) .. controls (592,188.95) and (461.18,242) .. (299.8,242) .. controls (138.42,242) and (7.59,188.95) .. (7.59,123.5) -- cycle ;
\draw  [draw opacity=0] (204.9,120.21) .. controls (199.53,133.89) and (182.63,143.76) .. (162.75,143.53) .. controls (141.99,143.28) and (124.78,132.1) .. (120.66,117.37) -- (163.21,110.73) -- cycle ; \draw   (204.9,120.21) .. controls (199.53,133.89) and (182.63,143.76) .. (162.75,143.53) .. controls (141.99,143.28) and (124.78,132.1) .. (120.66,117.37) ;
\draw  [draw opacity=0] (446.04,110.16) .. controls (444.29,127.22) and (425.26,140.5) .. (402.24,140.24) .. controls (380.49,139.98) and (362.63,127.72) .. (359.65,111.95) -- (402.69,107.44) -- cycle ; \draw   (446.04,110.16) .. controls (444.29,127.22) and (425.26,140.5) .. (402.24,140.24) .. controls (380.49,139.98) and (362.63,127.72) .. (359.65,111.95) ;
\draw  [draw opacity=0] (125.84,126.06) .. controls (133.24,112.75) and (147.74,103.84) .. (164.2,104.17) .. controls (179.95,104.48) and (193.51,113.17) .. (200.57,125.82) -- (163.21,146.03) -- cycle ; \draw   (125.84,126.06) .. controls (133.24,112.75) and (147.74,103.84) .. (164.2,104.17) .. controls (179.95,104.48) and (193.51,113.17) .. (200.57,125.82) ;
\draw  [draw opacity=0] (365.33,122.77) .. controls (372.72,109.46) and (387.22,100.55) .. (403.68,100.87) .. controls (419.43,101.19) and (432.99,109.88) .. (440.05,122.53) -- (402.7,142.74) -- cycle ; \draw   (365.33,122.77) .. controls (372.72,109.46) and (387.22,100.55) .. (403.68,100.87) .. controls (419.43,101.19) and (432.99,109.88) .. (440.05,122.53) ;
\draw  [color={rgb, 255:red, 208; green, 2; blue, 27 }  ,draw opacity=1 ] (63.92,93.98) -- (78.21,99.74) -- (63.92,105.5) ;
\draw  [color={rgb, 255:red, 208; green, 2; blue, 27 }  ,draw opacity=1 ] (275.67,88.94) -- (289.97,94.7) -- (275.67,100.46) ;
\draw  [color={rgb, 255:red, 208; green, 2; blue, 27 }  ,draw opacity=1 ] (509.79,85.65) -- (524.09,91.41) -- (509.79,97.17) ;
\draw [color={rgb, 255:red, 74; green, 144; blue, 226 }  ,draw opacity=1 ]   (156,105) .. controls (199,59) and (345,55) .. (392.71,101.65) ;
\draw [color={rgb, 255:red, 74; green, 144; blue, 226 }  ,draw opacity=1 ]   (186,110) .. controls (241,75) and (332,83) .. (370,115) ;
\draw [color={rgb, 255:red, 74; green, 144; blue, 226 }  ,draw opacity=1 ]   (15.67,94.93) .. controls (49,73) and (108,75) .. (134.17,114.26) ;
\draw [color={rgb, 255:red, 208; green, 2; blue, 27 }  ,draw opacity=1 ]   (7.59,118.46) .. controls (24.65,85.68) and (112.62,95.77) .. (125.32,127.03) ;
\draw [color={rgb, 255:red, 208; green, 2; blue, 27 }  ,draw opacity=1 ]   (201.13,126.86) .. controls (225.73,86.52) and (343.33,82.32) .. (364.8,123.74) ;
\draw [color={rgb, 255:red, 208; green, 2; blue, 27 }  ,draw opacity=1 ]   (440.61,123.57) .. controls (464.53,77.28) and (580.33,84.84) .. (592,123.5) ;
\draw [color={rgb, 255:red, 0; green, 0; blue, 0 }  ,draw opacity=1 ]   (138.66,111.73) .. controls (213.17,10.88) and (502.23,36.1) .. (566.86,75.6) ;
\draw [color={rgb, 255:red, 0; green, 0; blue, 0 }  ,draw opacity=1 ]   (9.39,109.21) .. controls (47.99,71.39) and (116.21,94.09) .. (132.37,118.46) ;
\draw [color={rgb, 255:red, 0; green, 0; blue, 0 }  ,draw opacity=1 ]   (423.23,105.85) .. controls (462.73,69.71) and (553.4,75.6) .. (590.2,108.37) ;
\draw [color={rgb, 255:red, 0; green, 0; blue, 0 }  ,draw opacity=1 ]   (404.38,100.81) .. controls (440.29,75.6) and (535.44,60.47) .. (576.74,84.84) ;
\draw [color={rgb, 255:red, 0; green, 0; blue, 0 }  ,draw opacity=1 ]   (171,105) .. controls (225,64) and (345.48,79.22) .. (376,112) ;
\draw [color={rgb, 255:red, 0; green, 0; blue, 0 }  ,draw opacity=1 ]   (430.41,110.89) .. controls (477.09,70.55) and (553.4,78.12) .. (592,115.1) ;
\draw    (168,143) .. controls (239,168) and (341.54,160.48) .. (377.45,135.27) ;
\draw    (156,143) .. controls (252,238) and (540.93,189.21) .. (576.84,164) ;
\draw    (385.53,137.79) .. controls (425.03,167.2) and (549.81,172.24) .. (585.72,147.03) ;
\draw    (25,164) .. controls (43,136) and (94.67,130.22) .. (136.86,136.95) ;
\draw    (10,138) .. controls (41,101) and (99.48,115.03) .. (130,131) ;
\draw    (194.31,133.59) .. controls (230.22,108.37) and (330.77,109.21) .. (369.37,127.7) ;
\draw    (185.34,139.47) .. controls (226.63,121.82) and (340.64,119.3) .. (372.96,131.9) ;
\draw    (434.9,129.38) .. controls (477.09,101.65) and (566.86,94.09) .. (592,128.54) ;
\draw    (423.23,136.11) .. controls (463.63,119.3) and (561.48,114.26) .. (591.1,132.74) ;

\draw (4.08,68.96) node [anchor=north west][inner sep=0.75pt]  [font=\Large]  {$\textcolor[rgb]{0.82,0.01,0.11}{\Gamma }$};
\draw (311.79,20.37) node [anchor=north west][inner sep=0.75pt]   [align=left] {$\displaystyle \lambda _{0}$};
\draw (42.63,101.73) node [anchor=north west][inner sep=0.75pt]  [color={rgb, 255:red, 208; green, 2; blue, 27 }  ,opacity=1 ]  {$\gamma _{1}$};
\draw (254.39,100.21) node [anchor=north west][inner sep=0.75pt]  [color={rgb, 255:red, 208; green, 2; blue, 27 }  ,opacity=1 ]  {$\gamma _{2}$};
\draw (256.54,52.87) node [anchor=north west][inner sep=0.75pt]  [color={rgb, 255:red, 74; green, 144; blue, 226 }  ,opacity=1 ]  {$\eta $};

\end{tikzpicture}
    \caption{Lamination $\lambda_0$ defined by the collection of oriented curves $\Gamma$. The component $\eta$ of $\lambda_0$ accumulates at $\gamma_1, \gamma_2$. according to their orientation.}
    \label{fig:lamination}
\end{figure}\noindent

Take representations $\rho_t:\pi_1(S)\rightarrow PSL(2,\mathbb{C})$ so that $\rho(\gamma_i)$ is neither parabolic or the identity (so there is a well-defined axis), and the endpoints of $\rho(\gamma_i), \rho(\gamma_j)$ are distinct for $i\neq j$. Assume as well that we have a given equivariant orientation/endpoint for axis of each lift of $\gamma_i$, which we denote by $\zeta$. Then we define the (generalized) pleated surface $(\tilde{f}_{\zeta} = \tilde{f}:\tilde{S}\rightarrow\overline{\mathbb{H}^3},\rho)$ as follows

\begin{enumerate}
    \item For any lift $\tilde{\gamma_i}$ of $\gamma_i$, map its preferred endpoint to the corresponding endpoint given by $\zeta$.
    \item For any lift of a component of $\lambda_0\setminus\Gamma$, send it to the geodesic in $\mathbb{H}^3$ joining the corresponding preferred endpoints. This is possible since by assumption the preferred endpoints in $\partial_\infty\mathbb{H}^3$ are distinct.
    \item For any lift of an ideal triangle in $\tilde{S}\setminus\tilde{\lambda_0}$, send it to the ideal triangle spanned by the distinct corresponding endpoints.
    \item\label{continuity} Finally, extend continuously to $\tilde{\Gamma}$. This makes the map $\tilde{f}:\tilde{S}\rightarrow\overline{\mathbb{H}^3}$ equivariant by $\rho$.
\end{enumerate}

Note that if every $\rho(\gamma_i)$ was loxodromic and the orientation $\zeta$ agrees with the orientation of $\Gamma$, this will be the classical notion of pleated surface. On the other hand if for some $\rho(\gamma_i)$ the orientations of $\zeta$ and $\Gamma$  disagree, then $\tilde{f}$ is the classical notion of pleated surface for the lamination $\lambda_1$, where $\lambda_1$ is obtained from $\Gamma$ after changing the orientations to have full agreement with $\zeta$.

If $\rho(\gamma_i)$ is elliptic then the situation is a bit more delicate. The continuous extension of step (\ref{continuity}) will send any lift $\tilde{\gamma_i}$ entirely to the preferred endpoint given by $\zeta$. This means that the pull-back metric on $S$ given by $\tilde{f}$ is of finite type with $\ell(\gamma_i)=0$, which is consistent with the fact that the complex length of $\rho(\gamma_i)$ is purely imaginary.

\subsection{Bending tranverse cocycle}\label{subsec:bendingcocycle}

In \cite{Bonahon96}, Bonahon defines the bending transverse cocycle of a pleated surface. This means that for each arc $\alpha$ transverse to a lamination $\lambda$ we have a number $\beta(\alpha) \in \mathbb{R}/2\pi\mathbb{Z}$ called the \textit{bending}, which represents the amount of turning made by the pleated surface between the geodesic faces containing the endpoints of $\alpha$. The cocycle $\beta$ is additive under finite subdivision of a transverse arcs and can be defined as follows. Consider all the geodesics of the lamination that $\alpha$ crosses, and the geodesic faces (or rather plaques, as denoted by Bonahon) of the pleated surface going between those geodesics. The boundary of the plaques define two curves $\eta_{1,2}$ in $\partial_\infty\mathbb{H}^3$. Then the bending of $\alpha$ is defined as the difference of angles between the end plaques minus the integral of the signed curvature of either $\eta_{1,2}$.

For our fixed lamination $(\tilde{S},\tilde{\lambda_0})$ we can finitely decompose any transverse arc into smaller transverse arcs so that each smaller arc intersects $\tilde{\Gamma}$ at most once. This simplifies the description of the bending cocycle as follows.

If an arc $\alpha$ does not intersect $\tilde{\Gamma}$ then it only intersects finitely many geodesic lines of $\tilde{\lambda_0}$. Then the bending is just given by the sum of the finitely many angles involved.

Now say that $\alpha$ intersects exactly one component $\tilde{\gamma_i}$ of $\tilde{\Gamma}$. Furthermore, assume that $\alpha$ only intersects leaves of $\lambda_0$ that accumulate in $\gamma_i$. Then all plaques involved contain the preferred endpoint of $\tilde{\gamma_i}$. This means that one of the two curves $\eta_{1,2}$ degenerates to a point, so we are only left with the angle of the two intersecting end plaques.

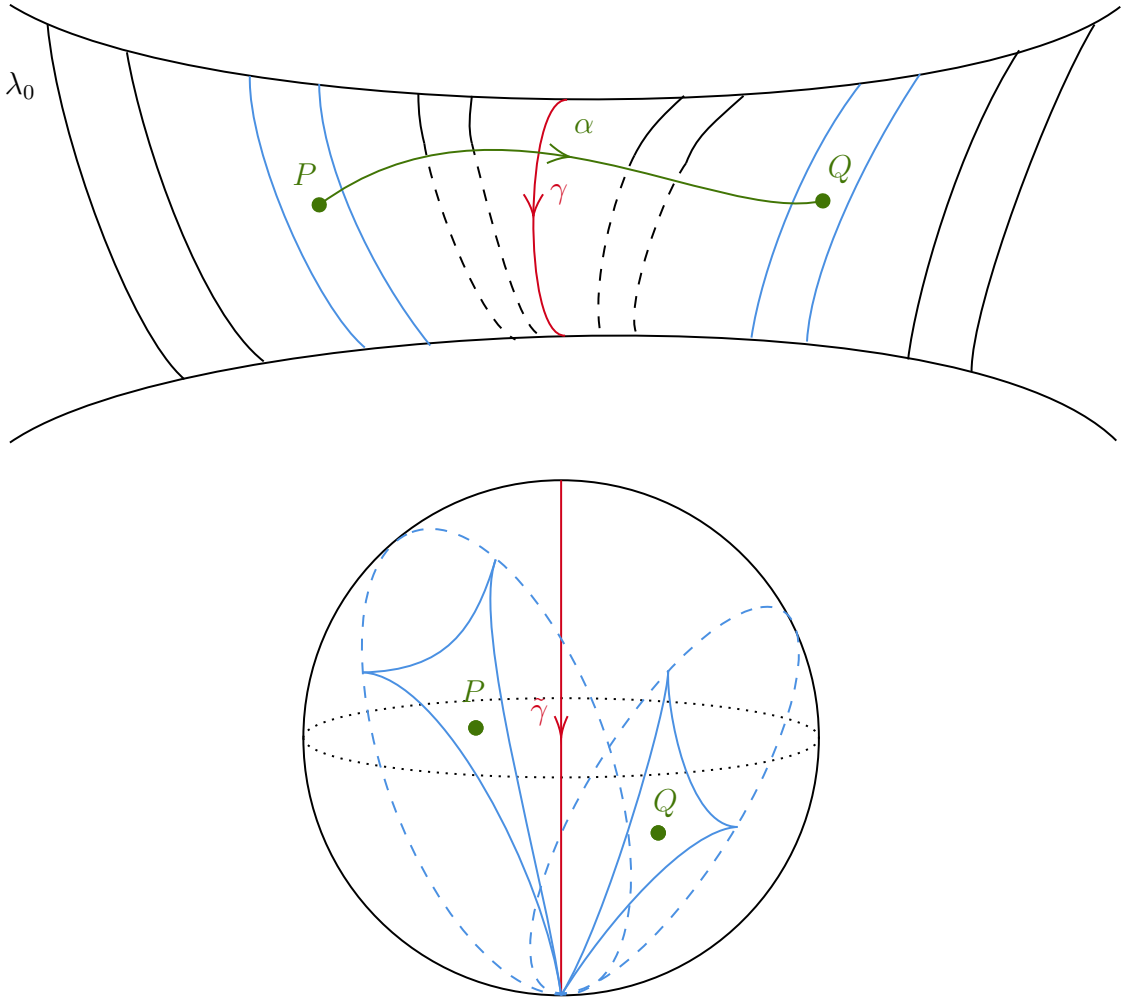
\begin{figure}[hbt!]
    \centering

\tikzset{every picture/.style={line width=0.75pt}} 

\begin{tikzpicture}[x=0.75pt,y=0.75pt,yscale=-1,xscale=1]

\draw    (59,254) .. controls (149,192) and (536,173) .. (617,253) ;
\draw    (59,33) .. controls (158,95) and (536,98) .. (619,34) ;
\draw [color={rgb, 255:red, 74; green, 144; blue, 226 }  ,draw opacity=1 ]   (180,69) .. controls (179,108) and (214,186) .. (238,206) ;
\draw [color={rgb, 255:red, 74; green, 144; blue, 226 }  ,draw opacity=1 ]   (215,73) .. controls (215,111) and (241,168) .. (271,205) ;
\draw [color={rgb, 255:red, 208; green, 2; blue, 27 }  ,draw opacity=1 ]   (339,200) .. controls (317,201) and (318,80) .. (340,81) ;
\draw [shift={(323.01,139.6)}, rotate = 269.95] [color={rgb, 255:red, 208; green, 2; blue, 27 }  ,draw opacity=1 ][line width=0.75]    (10.93,-4.9) .. controls (6.95,-2.3) and (3.31,-0.67) .. (0,0) .. controls (3.31,0.67) and (6.95,2.3) .. (10.93,4.9)   ;
\draw    (78,43) .. controls (83,99) and (120,199) .. (147,222) ;
\draw    (118,57) .. controls (126,104) and (156,189) .. (187,213) ;
\draw    (265,78) .. controls (265,88) and (266,95) .. (268,103) ;
\draw  [dash pattern={on 4.5pt off 4.5pt}]  (268,103) .. controls (272,130) and (296,194) .. (314,202) ;
\draw    (292,79) .. controls (291,88) and (290,95) .. (292,103) ;
\draw  [dash pattern={on 4.5pt off 4.5pt}]  (292,103) .. controls (298,127) and (313,191) .. (325,199) ;
\draw    (606,43) .. controls (583,81) and (544,185) .. (544,218) ;
\draw    (568,56) .. controls (537,104) and (516,179) .. (512,212) ;
\draw [color={rgb, 255:red, 74; green, 144; blue, 226 }  ,draw opacity=1 ]   (518,68) .. controls (490,110) and (463,167) .. (461,203) ;
\draw [color={rgb, 255:red, 74; green, 144; blue, 226 }  ,draw opacity=1 ]   (488,73) .. controls (459,110) and (438,167) .. (433,201) ;
\draw    (429,79) .. controls (411,95) and (405,100) .. (400,113) ;
\draw  [dash pattern={on 4.5pt off 4.5pt}]  (400,113) .. controls (387,131) and (369,186) .. (375,199) ;
\draw    (399,79) .. controls (388,89) and (380,97) .. (374,109) ;
\draw  [dash pattern={on 4.5pt off 4.5pt}]  (374,109) .. controls (363,130) and (352,175) .. (357,200) ;
\draw [color={rgb, 255:red, 65; green, 117; blue, 5 }  ,draw opacity=1 ]   (215,134) .. controls (305,67) and (414,144) .. (469,132) ;
\draw [shift={(469,132)}, rotate = 347.69] [color={rgb, 255:red, 65; green, 117; blue, 5 }  ,draw opacity=1 ][fill={rgb, 255:red, 65; green, 117; blue, 5 }  ,fill opacity=1 ][line width=0.75]      (0, 0) circle [x radius= 3.35, y radius= 3.35]   ;
\draw [shift={(341.39,109.8)}, rotate = 188.83] [color={rgb, 255:red, 65; green, 117; blue, 5 }  ,draw opacity=1 ][line width=0.75]    (10.93,-4.9) .. controls (6.95,-2.3) and (3.31,-0.67) .. (0,0) .. controls (3.31,0.67) and (6.95,2.3) .. (10.93,4.9)   ;
\draw [shift={(215,134)}, rotate = 323.33] [color={rgb, 255:red, 65; green, 117; blue, 5 }  ,draw opacity=1 ][fill={rgb, 255:red, 65; green, 117; blue, 5 }  ,fill opacity=1 ][line width=0.75]      (0, 0) circle [x radius= 3.35, y radius= 3.35]   ;
\draw   (207,403) .. controls (207,331.2) and (265.2,273) .. (337,273) .. controls (408.8,273) and (467,331.2) .. (467,403) .. controls (467,474.8) and (408.8,533) .. (337,533) .. controls (265.2,533) and (207,474.8) .. (207,403) -- cycle ;
\draw  [dash pattern={on 0.84pt off 2.51pt}] (207,403) .. controls (207,391.95) and (265.2,383) .. (337,383) .. controls (408.8,383) and (467,391.95) .. (467,403) .. controls (467,414.05) and (408.8,423) .. (337,423) .. controls (265.2,423) and (207,414.05) .. (207,403) -- cycle ;
\draw [color={rgb, 255:red, 208; green, 2; blue, 27 }  ,draw opacity=1 ]   (337,273) -- (337,533) ;
\draw [shift={(337,403)}, rotate = 270] [color={rgb, 255:red, 208; green, 2; blue, 27 }  ,draw opacity=1 ][line width=0.75]    (10.93,-3.29) .. controls (6.95,-1.4) and (3.31,-0.3) .. (0,0) .. controls (3.31,0.3) and (6.95,1.4) .. (10.93,3.29)   ;
\draw [color={rgb, 255:red, 74; green, 144; blue, 226 }  ,draw opacity=1 ]   (237,370) .. controls (277,372) and (329,473) .. (337,533) ;
\draw [color={rgb, 255:red, 74; green, 144; blue, 226 }  ,draw opacity=1 ]   (304,313) .. controls (291,348) and (329,473) .. (337,533) ;
\draw [color={rgb, 255:red, 74; green, 144; blue, 226 }  ,draw opacity=1 ]   (237,370) .. controls (279,369) and (293,349) .. (304,313) ;
\draw [color={rgb, 255:red, 74; green, 144; blue, 226 }  ,draw opacity=1 ]   (391,369) .. controls (391,388) and (360,495) .. (337,533) ;
\draw [color={rgb, 255:red, 74; green, 144; blue, 226 }  ,draw opacity=1 ]   (391,369) .. controls (390,389) and (401,448) .. (426,448) ;
\draw [color={rgb, 255:red, 74; green, 144; blue, 226 }  ,draw opacity=1 ]   (337,533) .. controls (360,496) and (401,449) .. (426,448) ;
\draw  [color={rgb, 255:red, 74; green, 144; blue, 226 }  ,draw opacity=1 ][dash pattern={on 4.5pt off 4.5pt}] (261.76,299.19) .. controls (290.93,288.46) and (333.67,331.66) .. (357.21,395.67) .. controls (380.75,459.68) and (376.18,520.27) .. (347,531) .. controls (317.82,541.73) and (275.09,498.53) .. (251.55,434.52) .. controls (228.01,370.51) and (232.58,309.92) .. (261.76,299.19) -- cycle ;
\draw  [color={rgb, 255:red, 74; green, 144; blue, 226 }  ,draw opacity=1 ][dash pattern={on 4.5pt off 4.5pt}] (437.53,337.15) .. controls (463.75,333.2) and (463.34,373.54) .. (436.62,427.25) .. controls (409.9,480.96) and (366.99,527.7) .. (340.77,531.65) .. controls (314.56,535.6) and (314.96,495.26) .. (341.68,441.56) .. controls (368.4,387.85) and (411.32,341.1) .. (437.53,337.15) -- cycle ;
\draw [color={rgb, 255:red, 65; green, 117; blue, 5 }  ,draw opacity=1 ]   (294,398) ;
\draw [shift={(294,398)}, rotate = 0] [color={rgb, 255:red, 65; green, 117; blue, 5 }  ,draw opacity=1 ][fill={rgb, 255:red, 65; green, 117; blue, 5 }  ,fill opacity=1 ][line width=0.75]      (0, 0) circle [x radius= 3.35, y radius= 3.35]   ;
\draw [shift={(294,398)}, rotate = 0] [color={rgb, 255:red, 65; green, 117; blue, 5 }  ,draw opacity=1 ][fill={rgb, 255:red, 65; green, 117; blue, 5 }  ,fill opacity=1 ][line width=0.75]      (0, 0) circle [x radius= 3.35, y radius= 3.35]   ;
\draw [color={rgb, 255:red, 65; green, 117; blue, 5 }  ,draw opacity=1 ]   (386,451) ;
\draw [shift={(386,451)}, rotate = 0] [color={rgb, 255:red, 65; green, 117; blue, 5 }  ,draw opacity=1 ][fill={rgb, 255:red, 65; green, 117; blue, 5 }  ,fill opacity=1 ][line width=0.75]      (0, 0) circle [x radius= 3.35, y radius= 3.35]   ;
\draw [shift={(386,451)}, rotate = 0] [color={rgb, 255:red, 65; green, 117; blue, 5 }  ,draw opacity=1 ][fill={rgb, 255:red, 65; green, 117; blue, 5 }  ,fill opacity=1 ][line width=0.75]      (0, 0) circle [x radius= 3.35, y radius= 3.35]   ;

\draw (330,122.4) node [anchor=north west][inner sep=0.75pt]  [color={rgb, 255:red, 208; green, 2; blue, 27 }  ,opacity=1 ]  {$\gamma $};
\draw (55,65.4) node [anchor=north west][inner sep=0.75pt]    {$\lambda _{0}$};
\draw (342,89.4) node [anchor=north west][inner sep=0.75pt]  [color={rgb, 255:red, 65; green, 117; blue, 5 }  ,opacity=1 ]  {$\alpha $};
\draw (320,381.4) node [anchor=north west][inner sep=0.75pt]  [color={rgb, 255:red, 208; green, 2; blue, 27 }  ,opacity=1 ]  {$\tilde{\gamma }$};
\draw (200,111.4) node [anchor=north west][inner sep=0.75pt]  [color={rgb, 255:red, 65; green, 117; blue, 5 }  ,opacity=1 ]  {$P$};
\draw (470,107.4) node [anchor=north west][inner sep=0.75pt]  [color={rgb, 255:red, 65; green, 117; blue, 5 }  ,opacity=1 ]  {$Q$};
\draw (285,372.4) node [anchor=north west][inner sep=0.75pt]  [color={rgb, 255:red, 65; green, 117; blue, 5 }  ,opacity=1 ]  {$P$};
\draw (382,427.4) node [anchor=north west][inner sep=0.75pt]  [color={rgb, 255:red, 65; green, 117; blue, 5 }  ,opacity=1 ]  {$Q$};

\end{tikzpicture}
    
    \caption{Bending of a transverse curve $\alpha$ intersecting a unique component $\gamma$ of $\Gamma$.}
    \label{fig:bending}
\end{figure}\noindent

Now it is easy to see that for our definition of (generalized) pleated surface, the two above definitions are well-defined and are additive under finite subdivision. Then for a general transverse arc $\alpha$ we can take any finite subdivision so every arc is of one of the two cases analyzed above, and define the bending of $\alpha$ as the sum of bending. Since the cocycle was already additive between the two types, this shows that the bending cocycle is well-defined and additive under finite subdivision. Moreover, this implies that for a smooth 1-parameter family of $\Gamma$-adapted representations $\rho_t$ where the orientation/endpoint choice $\zeta$ varies continuously, the bending varies smoothly. This is because the family of preferred endpoints will vary smoothly, and the bending of any arc is decomposed as the finite sum of finitely many angles of planes defined by preferred endpoints.

\subsection{Volume variation}\label{subsec:volumevariation}

Let $\rho_t$ be a smooth 1-parameter family of $\Gamma$-adapted representations. Let $f_t$ be the $\lambda_0$ generalized pleated surface, $m_t$ the metric induced from $\mathbb{H}^3$ in $S$ by $f_t$, and let $V_t$ be the volume of a $3$-chain bounded by $f_t$. Then
\[V'_t = \frac12 \ell_t(b'_t)
\]
where $b_t$ is the bending cocycle of $f_t$, and $\ell_t$ is the length of the (real-valued) transverse cocycle $b'_t$ with respect to the induced metric $m_t$. This is known for classical pleated surfaces as the Bonahon-Schalfli formula, as proved by Bonahon in \cite{Bonahon98}. We will explain the outline of Bonahon's proof while justifying that the method holds for our case. This in particular means that we need to explain how to make sense of the formula when elliptic transformations are involved.

\begin{figure}[hbt!]
    \centering

\tikzset{every picture/.style={line width=0.75pt}} 

\begin{tikzpicture}[x=0.75pt,y=0.75pt,yscale=-1,xscale=1]

\draw    (51,261) .. controls (141,199) and (528,180) .. (609,260) ;
\draw    (50,40) .. controls (149,102) and (527,105) .. (610,41) ;
\draw [color={rgb, 255:red, 208; green, 2; blue, 27 }  ,draw opacity=1 ]   (331,207) .. controls (309,208) and (310,87) .. (332,88) ;
\draw [shift={(315.01,146.6)}, rotate = 269.95] [color={rgb, 255:red, 208; green, 2; blue, 27 }  ,draw opacity=1 ][line width=0.75]    (10.93,-4.9) .. controls (6.95,-2.3) and (3.31,-0.67) .. (0,0) .. controls (3.31,0.67) and (6.95,2.3) .. (10.93,4.9)   ;
\draw    (70,50) .. controls (75,106) and (112,206) .. (139,229) ;
\draw    (110,64) .. controls (118,111) and (148,196) .. (179,220) ;
\draw    (171,76) .. controls (170,115) and (205,193) .. (229,213) ;
\draw    (206,80) .. controls (206,118) and (232,175) .. (262,212) ;
\draw    (257,85) .. controls (257,95) and (258,102) .. (260,110) ;
\draw  [dash pattern={on 4.5pt off 4.5pt}]  (260,110) .. controls (264,137) and (288,201) .. (306,209) ;
\draw    (284,86) .. controls (283,95) and (282,102) .. (284,110) ;
\draw  [dash pattern={on 4.5pt off 4.5pt}]  (284,110) .. controls (290,134) and (305,198) .. (317,206) ;
\draw    (599,50) .. controls (576,88) and (536,191) .. (533,225) ;
\draw    (560,63) .. controls (529,111) and (498,179) .. (496,215) ;
\draw    (510,75) .. controls (482,117) and (446,176) .. (449,212) ;
\draw    (480,80) .. controls (442,116) and (420,177) .. (420,208) ;
\draw    (421,86) .. controls (403,102) and (397,107) .. (392,120) ;
\draw  [dash pattern={on 4.5pt off 4.5pt}]  (392,120) .. controls (379,138) and (361,193) .. (367,206) ;
\draw    (391,86) .. controls (380,96) and (372,104) .. (366,116) ;
\draw  [dash pattern={on 4.5pt off 4.5pt}]  (366,116) .. controls (355,137) and (344,182) .. (349,207) ;
\draw [color={rgb, 255:red, 74; green, 144; blue, 226 }  ,draw opacity=1 ]   (197,120) .. controls (232,122) and (423,121) .. (465,121) ;
\draw  [color={rgb, 255:red, 74; green, 144; blue, 226 }  ,draw opacity=1 ][fill={rgb, 255:red, 206; green, 232; blue, 250 }  ,fill opacity=0.45 ] (494,77) .. controls (478,95) and (474,99) .. (465,121) .. controls (497,107) and (539,86) .. (575,66) .. controls (551,101) and (523,172) .. (518,222) .. controls (425,197) and (222,208) .. (153,226) .. controls (129,197) and (90,105) .. (88,70) .. controls (115,87) and (160,105) .. (197,120) .. controls (192,108) and (187,92) .. (186,78) .. controls (271,91) and (443,90) .. (494,77) -- cycle ;

\draw (322,129.4) node [anchor=north west][inner sep=0.75pt]  [color={rgb, 255:red, 208; green, 2; blue, 27 }  ,opacity=1 ]  {$\gamma $};
\draw (337,58.4) node [anchor=north west][inner sep=0.75pt]  [color={rgb, 255:red, 74; green, 144; blue, 226 }  ,opacity=1 ]  {$R$};
\draw (47,72.4) node [anchor=north west][inner sep=0.75pt]    {$\lambda _{0}$};

\end{tikzpicture}
    
    \caption{Rectangle $R$ going around the closed geodesic $\gamma$.}
    \label{fig:rectangles1}
\end{figure}
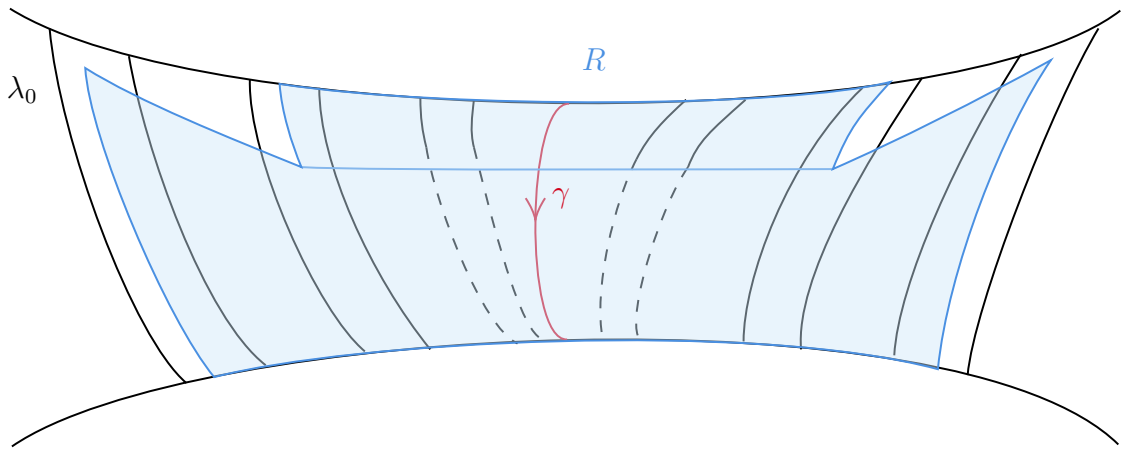\noindent

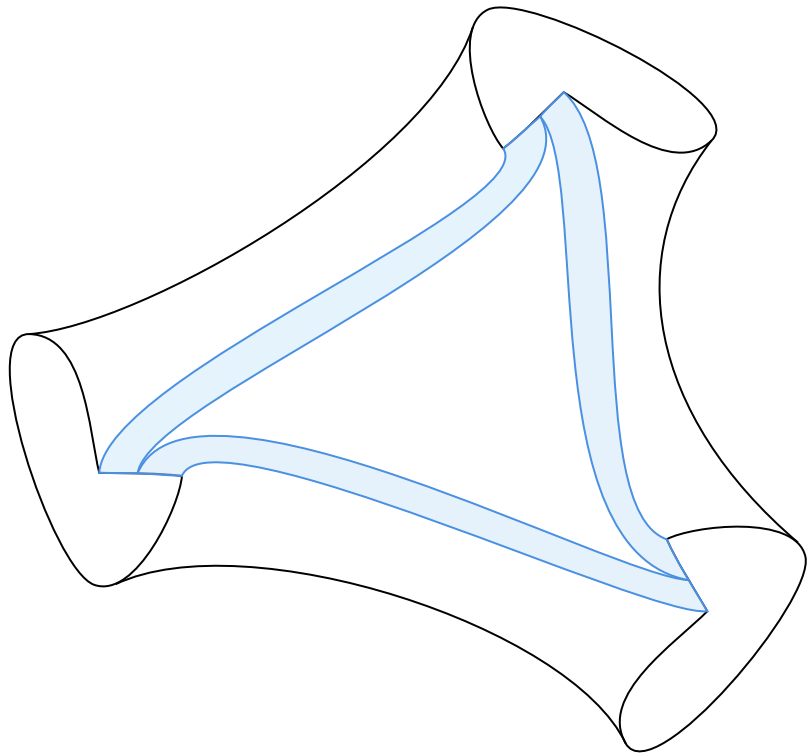
\begin{figure}[hbt!]
    \centering

\tikzset{every picture/.style={line width=0.75pt}} 

\begin{tikzpicture}[x=0.75pt,y=0.75pt,yscale=-1,xscale=1]

\draw   (348.63,75.04) .. controls (368.81,65.4) and (481.82,121.44) .. (461.46,141.08) .. controls (441.11,160.72) and (405.69,128.07) .. (386.89,116.73) .. controls (373.66,129.5) and (367.53,136.39) .. (356.37,145.19) .. controls (346.57,134.01) and (328.45,84.68) .. (348.63,75.04) -- cycle ;
\draw   (150.06,365.36) .. controls (129,357.86) and (88.98,238.24) .. (117.26,238.81) .. controls (145.54,239.38) and (147.4,287.51) .. (152.62,308.84) .. controls (171,309.21) and (180.21,308.69) .. (194.32,310.39) .. controls (193.31,325.22) and (171.13,372.86) .. (150.06,365.36) -- cycle ;
\draw   (508.79,351.26) .. controls (513.44,373.13) and (432.52,469.89) .. (418.19,445.51) .. controls (403.86,421.12) and (443.88,394.32) .. (459.3,378.7) .. controls (449.99,362.85) and (444.72,355.28) .. (438.78,342.37) .. controls (451.94,335.46) and (504.14,329.39) .. (508.79,351.26) -- cycle ;
\draw    (117.26,238.81) .. controls (173,235) and (319,152) .. (341,84) ;
\draw    (461.46,141.08) .. controls (416,199) and (429,279) .. (505,344) ;
\draw    (161,365) .. controls (227,334) and (390,386) .. (418.19,445.51) ;
\draw  [color={rgb, 255:red, 74; green, 144; blue, 226 }  ,draw opacity=1 ][fill={rgb, 255:red, 206; green, 232; blue, 250 }  ,fill opacity=0.53 ] (375,129) .. controls (379,124) and (384,120) .. (386.89,116.73) .. controls (426,151) and (395,327) .. (438.78,342.37) .. controls (442,349) and (445,355) .. (450,363) .. controls (370,349) and (401,158) .. (375,129) -- cycle ;
\draw  [color={rgb, 255:red, 74; green, 144; blue, 226 }  ,draw opacity=1 ][fill={rgb, 255:red, 206; green, 232; blue, 250 }  ,fill opacity=0.53 ] (172,309) .. controls (199,246) and (411,361) .. (450,363) .. controls (454,370) and (455.3,371.7) .. (459.3,378.7) .. controls (429,382) and (209,276) .. (194.32,310.39) .. controls (189,310) and (185,310) .. (172,309) -- cycle ;
\draw  [color={rgb, 255:red, 74; green, 144; blue, 226 }  ,draw opacity=1 ][fill={rgb, 255:red, 206; green, 232; blue, 250 }  ,fill opacity=0.52 ] (356.37,145.19) .. controls (364,139) and (369,134) .. (375,129) .. controls (407,185) and (178,270) .. (172,309) .. controls (164,309) and (158,309) .. (152.62,308.84) .. controls (156,259) and (375,174) .. (356.37,145.19) -- cycle ;

\end{tikzpicture}

    \caption{Rectangles on the complement of the cuffs}
    \label{fig:rectangles2}
\end{figure}\noindent

Cover $\lambda_0$ by geodesic rectangles $R^0_1,\ldots R^0_m$ with disjoint interior, so that the components of $\lambda_0\cap R^0_i$ are parallel to opposite sides of the rectangle. Each rectangle $R^0_i$ can be collapsed to an edge in order to obtain an embedded graph in $S$, and we can complete that graph to a triangulation $T_0$ of $S$, which lifts to a triangulation $\tilde{T_0}$ of $\tilde{S}$. Take then a $\rho_0$ equivariant map $g:\tilde{S}\rightarrow \mathbb{H}^3$ that is polyhedral with respect to $\tilde{T_0}$ and homotopic to $f_0$ through $\rho_0$-equivariant maps. For $t$ small, take $m_t$-geodesic rectangles $R^t_i$ so that the analogous statement holds for $\lambda_t$, $R^t_i$ is smooth on $t$, and up to isotopy on $S$ we have that $T_t=T_0$. Consider then $\rho_t$-equivariant maps $g_t:\tilde{S}\rightarrow \mathbb{H}^3$ that are polyhedral with respect to $\tilde{T_0}$ and homotopic to $f_t$ through $\rho_t$-equivariant maps. By the Schl\"afli Formula for polyhedral maps, we can calculate the variation of volume for the quotient of a $3$-chain bounded by $g_t$. Hence we can reduce the problem to calculate the variation of volume for the quotient of a $\rho_t$-equivariant homotopy between $f_t,g_t$. Since we can take maps $h_t:S\rightarrow S$ homotopic to the identity so that $h_t(R^t_i)$ is an arc of $T$, the volume of the quotient of the homotopy between $g_t$ and $g_t\circ\tilde{h_t}$ is equal to $0$. Then we can further reduce to calculate the variation of volume for the quotient of a $\rho_t$-equivariant homotopy $H_t$ between $f_t,g_t\circ\tilde{h_t}$.

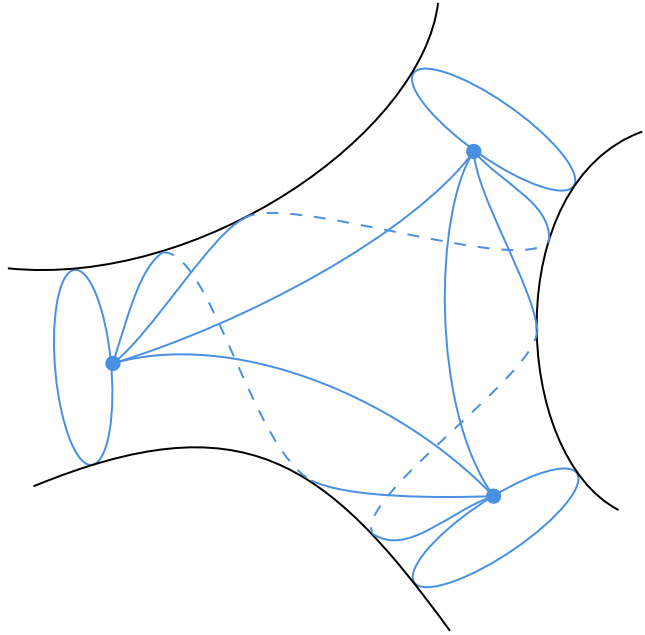
\begin{figure}[hbt!]
    \centering

\tikzset{every picture/.style={line width=0.75pt}} 

\begin{tikzpicture}[x=0.75pt,y=0.75pt,yscale=-1,xscale=1]

\draw  [color={rgb, 255:red, 74; green, 144; blue, 226 }  ,draw opacity=1 ] (374.69,101.25) .. controls (361.31,85.32) and (365.37,77.22) .. (383.77,83.16) .. controls (402.16,89.1) and (427.92,106.82) .. (441.31,122.75) .. controls (454.69,138.68) and (450.63,146.78) .. (432.23,140.84) .. controls (413.84,134.9) and (388.08,117.18) .. (374.69,101.25) -- cycle ;
\draw  [color={rgb, 255:red, 74; green, 144; blue, 226 }  ,draw opacity=1 ] (187.74,199.61) .. controls (191.28,179.11) and (200.09,176.99) .. (207.41,194.88) .. controls (214.74,212.77) and (217.8,243.89) .. (214.26,264.39) .. controls (210.72,284.89) and (201.91,287.01) .. (194.59,269.12) .. controls (187.26,251.23) and (184.2,220.11) .. (187.74,199.61) -- cycle ;
\draw  [color={rgb, 255:red, 74; green, 144; blue, 226 }  ,draw opacity=1 ] (430.93,285.72) .. controls (450.58,278.9) and (456.7,285.58) .. (444.59,300.65) .. controls (432.48,315.72) and (406.73,333.46) .. (387.07,340.28) .. controls (367.42,347.1) and (361.3,340.42) .. (373.41,325.35) .. controls (385.52,310.28) and (411.27,292.54) .. (430.93,285.72) -- cycle ;
\draw    (163,182) .. controls (260,192) and (372,114) .. (380,48) ;
\draw    (471,304) .. controls (417,276) and (411,139) .. (483,113) ;
\draw    (176,292) .. controls (281,249) and (323,278) .. (386,365) ;
\draw [color={rgb, 255:red, 74; green, 144; blue, 226 }  ,draw opacity=1 ]   (216,230) .. controls (282,210) and (367,165) .. (398,123) ;
\draw [shift={(398,123)}, rotate = 306.43] [color={rgb, 255:red, 74; green, 144; blue, 226 }  ,draw opacity=1 ][fill={rgb, 255:red, 74; green, 144; blue, 226 }  ,fill opacity=1 ][line width=0.75]      (0, 0) circle [x radius= 3.35, y radius= 3.35]   ;
\draw [shift={(216,230)}, rotate = 343.14] [color={rgb, 255:red, 74; green, 144; blue, 226 }  ,draw opacity=1 ][fill={rgb, 255:red, 74; green, 144; blue, 226 }  ,fill opacity=1 ][line width=0.75]      (0, 0) circle [x radius= 3.35, y radius= 3.35]   ;
\draw [color={rgb, 255:red, 74; green, 144; blue, 226 }  ,draw opacity=1 ]   (398,123) .. controls (378,153) and (376,253) .. (408,297) ;
\draw [shift={(408,297)}, rotate = 53.97] [color={rgb, 255:red, 74; green, 144; blue, 226 }  ,draw opacity=1 ][fill={rgb, 255:red, 74; green, 144; blue, 226 }  ,fill opacity=1 ][line width=0.75]      (0, 0) circle [x radius= 3.35, y radius= 3.35]   ;
\draw [color={rgb, 255:red, 74; green, 144; blue, 226 }  ,draw opacity=1 ]   (216,230) .. controls (275,213) and (358,245) .. (408,297) ;
\draw [color={rgb, 255:red, 74; green, 144; blue, 226 }  ,draw opacity=1 ]   (216,230) .. controls (235.29,215.53) and (266,164) .. (282,157) ;
\draw [color={rgb, 255:red, 74; green, 144; blue, 226 }  ,draw opacity=1 ] [dash pattern={on 4.5pt off 4.5pt}]  (282,157) .. controls (301.29,142.53) and (423,187) .. (436,168) ;
\draw [color={rgb, 255:red, 74; green, 144; blue, 226 }  ,draw opacity=1 ]   (398,123) .. controls (413,140) and (437,149) .. (436,168) ;
\draw [color={rgb, 255:red, 74; green, 144; blue, 226 }  ,draw opacity=1 ]   (398,123) .. controls (399,145) and (428,191) .. (430,213) ;
\draw [color={rgb, 255:red, 74; green, 144; blue, 226 }  ,draw opacity=1 ] [dash pattern={on 4.5pt off 4.5pt}]  (430,213) .. controls (431,235) and (337,298) .. (347,316) ;
\draw [color={rgb, 255:red, 74; green, 144; blue, 226 }  ,draw opacity=1 ]   (347,316) .. controls (365,328) and (386,304) .. (408,297) ;
\draw [color={rgb, 255:red, 74; green, 144; blue, 226 }  ,draw opacity=1 ]   (241,174) .. controls (230,182) and (225,201) .. (216,230) ;
\draw [color={rgb, 255:red, 74; green, 144; blue, 226 }  ,draw opacity=1 ] [dash pattern={on 4.5pt off 4.5pt}]  (241,174) .. controls (265,172) and (286,273) .. (316,289) ;
\draw [color={rgb, 255:red, 74; green, 144; blue, 226 }  ,draw opacity=1 ]   (316,289) .. controls (334,298) and (380,298) .. (408,297) ;

\end{tikzpicture}

    \caption{Triangulation on a pair of pants}
    \label{fig:graph}
\end{figure}\noindent

The next step is to divide the homotopy $H_t$ into a family of $\rho_t$ equivariant polyhedral pieces. Fixing a rectangle $R^t_i$, and given $g_t\circ\tilde{h_t}=H_t(.,0)$ sends $R^t_i$ to a geodesic segment, we want extend $H$ to $R^t_i\times[0,1]$ by geodesic segments so that $f_t=H_t(.,1)$.  In order to do so, for each component $R$ of $R^t_i\setminus\lambda_t$, we define $H_t(R\times[0,1])$ so that decomposes into the union of a pyramid with square basis given by $R$, and a tetrahedra that shares a side with the pyramid (see Figure \ref{fig:3chain}). Because $f_t, g_t\circ\tilde{h_t}$ are $\rho_t$-equivariant, this family of polyhedra $P_t$ is $\rho_t$-equivariant. Since $(\lambda_t\cap R^t_i) \times [0,1]$ has $3$-dimensional Lebesgue measure $0$, we can focus solely in the family of polyhedra $P_t$ in order to calculate volumes.

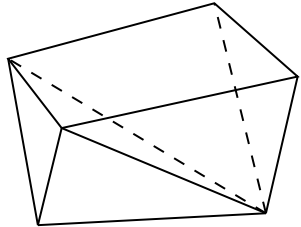
\begin{figure}[hbt!]
    \centering

\tikzset{every picture/.style={line width=0.75pt}} 

\begin{tikzpicture}[x=0.75pt,y=0.75pt,yscale=-1,xscale=1]

\draw   (356,78) -- (398,115) -- (279,141) -- (252,106) -- cycle ;
\draw    (279,141) -- (382,184) ;
\draw    (398,115) -- (382,184) ;
\draw    (382,184) -- (267,190) ;
\draw    (279,141) -- (267,190) ;
\draw    (252,106) -- (267,190) ;
\draw  [dash pattern={on 4.5pt off 4.5pt}]  (382,184) -- (252,106) ;
\draw  [dash pattern={on 4.5pt off 4.5pt}]  (382,184) -- (356,78) ;

\end{tikzpicture}

    \caption{$3$-chain obtained as the union of a pyramid and a prism}
    \label{fig:3chain}
\end{figure}\noindent

Recall that the variation of volume of a polyhedra is given by the sum of half lengths of edges times the variation of the dihedral angle. Then Bonahon \cite{Bonahon98} argues that for any interior edge and for any edge share with $g_t\circ\tilde{h_t}$, the different contributions cancel out. As for edges appearing in $f_t$, their sum can be reinterpreted as the half-length of the variation of the bending cocycle. This is the delicate part of the argument, because as Bonahon points out, edges are not locally finite, so appropriate sumability and convergence should be proved. Bonahon's statement covers the case when all $\gamma\in\Gamma$ are loxodromic, so we are left to justify when some $\gamma$ is elliptic. Hence we concentrate on this case.

For $\lambda_0$ the polyhedral subdivision can be simplified in such a way that the questions of sumability and convergence are easier to conclude.

\begin{figure}[hbt!]
    \centering

\tikzset{every picture/.style={line width=0.75pt}} 

\begin{tikzpicture}[x=0.75pt,y=0.75pt,yscale=-1,xscale=1]

\draw   (208,72) -- (235,119) -- (126,142) -- (101,96) -- cycle ;
\draw [color={rgb, 255:red, 208; green, 2; blue, 27 }  ,draw opacity=1 ]   (213,182) -- (109,201) ;
\draw    (126,142) -- (109,201) ;
\draw    (235,119) -- (213,182) ;
\draw    (109,201) -- (101,96) ;
\draw  [dash pattern={on 4.5pt off 4.5pt}]  (213,182) -- (208,72) ;
\draw   (488,75) -- (515,122) -- (406,145) -- (381,99) -- cycle ;
\draw    (406,145) -- (389,204) ;
\draw    (515,122) -- (389,204) ;
\draw    (389,204) -- (381,99) ;
\draw  [dash pattern={on 4.5pt off 4.5pt}]  (389,204) -- (488,75) ;
\draw [color={rgb, 255:red, 208; green, 2; blue, 27 }  ,draw opacity=1 ]   (493,185) -- (389,204) ;

\end{tikzpicture}

    \caption{Prisms obtained as the $3$-chain in the cases when $\gamma$ is loxodromic (left) or elliptic (right).}
    \label{fig:prisms}
\end{figure}
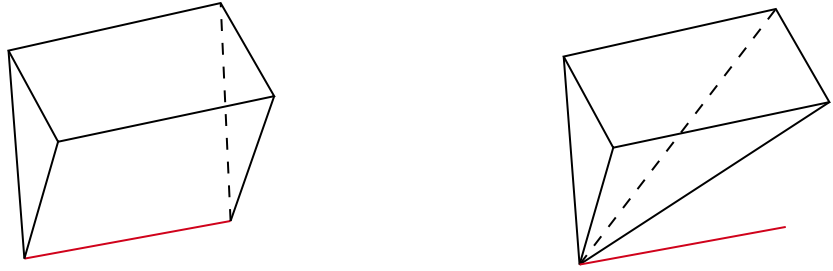\noindent

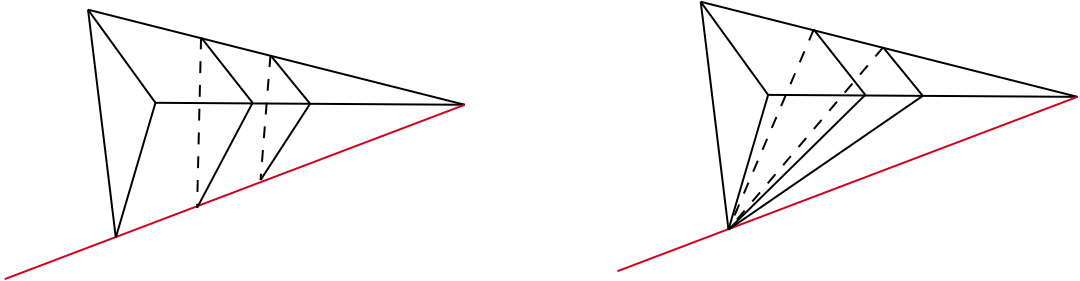
\begin{figure}[hbt!]
    \centering

\tikzset{every picture/.style={line width=0.75pt}} 

\begin{tikzpicture}[x=0.75pt,y=0.75pt,yscale=-1,xscale=1]

\draw    (95,51) -- (285,99) ;
\draw    (285,99) -- (129,98) ;
\draw [color={rgb, 255:red, 208; green, 2; blue, 27 }  ,draw opacity=1 ]   (53,187) -- (285,99) ;
\draw    (95,51) -- (129,98) ;
\draw    (129,98) -- (109,166) ;
\draw    (95,51) -- (109,166) ;
\draw    (152,65) -- (178,98) ;
\draw    (178,98) -- (150,151) ;
\draw  [dash pattern={on 4.5pt off 4.5pt}]  (152,65) -- (150,151) ;
\draw    (187,74) -- (207,98.5) ;
\draw  [dash pattern={on 4.5pt off 4.5pt}]  (187,74) -- (182,137) ;
\draw    (207,98.5) -- (182,137) ;
\draw    (404,47) -- (594,95) ;
\draw    (594,95) -- (438,94) ;
\draw [color={rgb, 255:red, 208; green, 2; blue, 27 }  ,draw opacity=1 ]   (362,183) -- (594,95) ;
\draw    (404,47) -- (438,94) ;
\draw    (438,94) -- (418,162) ;
\draw    (404,47) -- (418,162) ;
\draw    (461,61) -- (487,94) ;
\draw    (487,94) -- (418,162) ;
\draw  [dash pattern={on 4.5pt off 4.5pt}]  (461,61) -- (418,162) ;
\draw    (496,70) -- (516,94.5) ;
\draw  [dash pattern={on 4.5pt off 4.5pt}]  (496,70) -- (418,162) ;
\draw    (516,94.5) -- (418,162) ;

\end{tikzpicture}

    \caption{Ideal prisms obtained in the cases when $\gamma$ is loxodromic (left) or elliptic (right)}
    \label{fig:idealprisms}
\end{figure}\noindent

\begin{figure}[hbt!]
    \centering

\tikzset{every picture/.style={line width=0.75pt}} 

\begin{tikzpicture}[x=0.75pt,y=0.75pt,yscale=-1,xscale=1]

\draw [color={rgb, 255:red, 208; green, 2; blue, 27 }  ,draw opacity=1 ]   (340,20) -- (340,320) ;
\draw    (350,20) .. controls (350,209) and (383,313) .. (430,320) ;
\draw    (360,20) .. controls (360,209) and (423,313) .. (470,320) ;
\draw    (370,20) .. controls (370,209) and (463,313) .. (510,320) ;
\draw    (380,20) .. controls (380,209) and (503,313) .. (550,320) ;
\draw    (390,20) .. controls (390,209) and (543,314) .. (590,321) ;
\draw    (400,20) .. controls (400,209) and (584,313) .. (631,320) ;
\draw    (330,20) .. controls (330,209) and (296,314) .. (250,320) ;
\draw    (321,20) .. controls (321,209) and (256,314) .. (210,320) ;
\draw    (311,20) .. controls (311,209) and (216,314) .. (170,320) ;
\draw    (300,20) .. controls (300,209) and (176,314) .. (130,320) ;
\draw    (290,20) .. controls (290,209) and (136,314) .. (90,320) ;
\draw    (281,20) .. controls (281,209) and (96,314) .. (50,320) ;
\draw [color={rgb, 255:red, 74; green, 144; blue, 226 }  ,draw opacity=1 ]   (374,201) .. controls (384,234) and (401,290) .. (423,300) ;
\draw [color={rgb, 255:red, 74; green, 144; blue, 226 }  ,draw opacity=1 ]   (382,102) .. controls (388,104) and (400,107) .. (407,110) ;
\draw [color={rgb, 255:red, 74; green, 144; blue, 226 }  ,draw opacity=1 ]   (269,111) .. controls (275,109) and (290,104) .. (297,102) ;
\draw [color={rgb, 255:red, 74; green, 144; blue, 226 }  ,draw opacity=1 ]   (306,201) .. controls (298,228) and (282,288) .. (253,301) ;
\draw [color={rgb, 255:red, 208; green, 2; blue, 27 }  ,draw opacity=1 ]   (340,340) -- (340,640) ;
\draw    (350,340) .. controls (350,529) and (383,633) .. (430,640) ;
\draw    (360,340) .. controls (360,529) and (423,633) .. (470,640) ;
\draw    (370,340) .. controls (370,529) and (463,633) .. (510,640) ;
\draw    (380,340) .. controls (380,529) and (503,633) .. (550,640) ;
\draw    (390,340) .. controls (390,529) and (543,634) .. (590,641) ;
\draw    (400,340) .. controls (400,529) and (584,633) .. (631,640) ;
\draw    (330,340) .. controls (330,529) and (296,634) .. (250,640) ;
\draw    (321,340) .. controls (321,529) and (256,634) .. (210,640) ;
\draw    (311,340) .. controls (311,529) and (216,634) .. (170,640) ;
\draw    (300,340) .. controls (300,529) and (176,634) .. (130,640) ;
\draw    (290,340) .. controls (290,529) and (136,634) .. (90,640) ;
\draw    (281,340) .. controls (281,529) and (96,634) .. (50,640) ;
\draw [color={rgb, 255:red, 74; green, 144; blue, 226 }  ,draw opacity=1 ]   (374,521) .. controls (384,554) and (401,610) .. (423,620) ;
\draw [color={rgb, 255:red, 74; green, 144; blue, 226 }  ,draw opacity=1 ]   (374,521) .. controls (383,524) and (415,536) .. (424,540) ;
\draw [color={rgb, 255:red, 74; green, 144; blue, 226 }  ,draw opacity=1 ]   (382,422) .. controls (388,424) and (400,427) .. (407,430) ;
\draw [color={rgb, 255:red, 74; green, 144; blue, 226 }  ,draw opacity=1 ]   (269,431) .. controls (275,429) and (290,424) .. (297,422) ;
\draw [color={rgb, 255:red, 74; green, 144; blue, 226 }  ,draw opacity=1 ]   (256,540) .. controls (269,534) and (298,524) .. (306,521) ;
\draw [color={rgb, 255:red, 74; green, 144; blue, 226 }  ,draw opacity=1 ]   (306,521) .. controls (298,548) and (280,608) .. (253,621) ;
\draw  [color={rgb, 255:red, 74; green, 144; blue, 226 }  ,draw opacity=1 ][fill={rgb, 255:red, 206; green, 232; blue, 250 }  ,fill opacity=0.55 ] (298,102) .. controls (321,102) and (358,101) .. (383,102) .. controls (387,130) and (407,196) .. (425,220) .. controls (409,214) and (387,208) .. (374,201) .. controls (365,199) and (323,202) .. (306,201) .. controls (293,205) and (270,215) .. (257,220) .. controls (274,194) and (297,124) .. (298,102) -- cycle ;
\draw  [color={rgb, 255:red, 74; green, 144; blue, 226 }  ,draw opacity=1 ][fill={rgb, 255:red, 206; green, 232; blue, 250 }  ,fill opacity=0.44 ] (360,340) .. controls (365,340) and (375,340) .. (380,340) .. controls (380,372) and (384,404) .. (388,424) .. controls (386,424) and (388,424) .. (382,422) .. controls (383,436) and (410,527) .. (424,540) .. controls (411,534) and (396,530) .. (385,525) .. controls (371,486) and (360,408) .. (360,340) -- cycle ;
\draw  [color={rgb, 255:red, 74; green, 144; blue, 226 }  ,draw opacity=1 ][fill={rgb, 255:red, 206; green, 232; blue, 250 }  ,fill opacity=0.44 ] (321,340) .. controls (326,340) and (295,340) .. (300,340) .. controls (300,372) and (296,403) .. (291,424) .. controls (296,423) and (295,422) .. (297,422) .. controls (295,452) and (271,515) .. (256,540) .. controls (268,535) and (280,531) .. (295,525) .. controls (310,482) and (321,408) .. (321,340) -- cycle ;

\end{tikzpicture}
    
    \caption{}
    \label{fig:idealchange}
\end{figure}\noindent

The rectangles $R^t_1,\ldots,R^t_m$ are taken in such a way that each closed curve $\gamma$ in $\Gamma$ is covered by the closure of a single rectangle $R^t_i$, which we label by $R^t_\gamma$ (see Figure \ref{fig:rectangles1}). In the case that $\gamma$ is elliptic the vertical sides are not well-defined segments, but we have a well-defined pair of cusped cylinders which union we still denoted by $R^0_\gamma$.  Observe that it is only in these rectangles where $\lambda_0\cap R^t_i$ is not a finite union of geodesic segments (see Figure \ref{fig:rectangles2} for how to cover these finitely many segments by rectangles). The horizontal collapse of such $R^t_\gamma$ is a closed edge homotopic to the curve $\gamma$ covered by $R^t_\gamma$ (see Figure \ref{fig:graph} for how to complete to a triangulation on each pair of pants), from where we will choose that the $\rho_t$-equivariant polyhedral maps $g_t:\tilde{S} \rightarrow \mathbb{H}^3$ send these closed edges to the axis of $\rho_t(\gamma)$. The potential issues of choosing $g_t$ in such a way are that (1) the polyhedral decomposition of the homotopy $H_t$ might include degenerate polyhedra and (2) how to make sense of the length of $b'$ in the Bonahon-Schl\"afli formula.

For (1) the only $3$-chains that could degenerate are the ones concerning $\gamma$. In preparation, take $R^0_\gamma$ such that each horizontal side extends to a geodesic ray with the same endpoint as $\gamma$. In the axis $\tilde{\gamma}$ take a point $A$ so that any $\gamma$ translate of $A$ does not belong to either $\tilde{\lambda_0}$ or to any lifting of the horizontal sides of $R^0_\gamma$. This is possible because we are in the open set where $\tilde{\gamma}$ is not included in $\tilde{\lambda_0}$. Use $A$ as the vertex in $\gamma$ of the triangulation of Figure \ref{fig:graph}. Since the horizontal sides of $R^0_\gamma$, $\tilde{\gamma}$ and $\lambda_0\cap R^0_\gamma$ have all a point at infinity in common, the associated $3$-chains to $R^0_\gamma$ are in fact prisms. This is the combinatorial type we will take for these $3$-chains. Such prisms are not necessarily non-degenerate, as if $\gamma$ is elliptic then a side of the prism will collapse to a point. Regardless, since by choice the translates of $A$ are disjoint from the boundary of the opposite rectangle, the planes containing a face of the prism are all well-defined. Similarly, each edge of the prism is contained in a well-defined geodesic line (in Figure \ref{fig:prisms} the collapsed segment belongs to $\tilde{\gamma}$). Hence, even if the prism degenerates, we have a well-defined notion of angles between adjacent faces, where we allowed the values of $0,\pi$ in $\mathbb{R}/2\pi\mathbb{Z}$ for angles and $0$ for lengths. Same follows for the $3$-chains bounded by $g_0\circ\tilde{h}_0$, and we extend this configuration for $t$ small.

For (2) Bonahon observes that $\frac12\ell(b'_t)$ in $R^t_\gamma$ is equal to the variations of volume of the $3$-chains obtained from $R^t_\gamma$ minus the edge contributions of edges that do not belong to the lamination. More specifically, Bonahon proves that the contribution to the Schl\"afli formula of the edges in the lamination add up to $\frac12\ell(b'_t)$, while the sum of contributions of all other edges cancel out while adding all $R^t_i$ together. So for our particular construction we will rearrange the rectangles of $R^t_\gamma\setminus \lambda^t_0$ for the top to the bottom diagram of Figure \ref{fig:idealchange}. Doing so will stack all prisms to form finitely many tetrahedra with an ideal vertex such as in Figure \ref{fig:idealprisms}. Polyhedra with ideal vertices have as well a Schl\"afli formula, where one take an horoball at each ideal vertex and computes the (signed) length of an edge by taking the finite segment in the edge that goes between vertices/horoballs. Then the sum of (edge length)(derivative of dihedral angle) is well defined and independent of the family of horoballs at ideal vertices.

Now we have to subtract the contribution of edges that are not in the lamination $\tilde{\lambda_t}$. Edges joining $\tilde{\gamma}$ to the rest of the prism already cancel out once we assembled the ideal tetrahedra, with the exception of the finite face of the tetrahedra, which we exclude. Hence we are left with the contributions of the edges at the ideal vertex. Observe that from the change of orientation of the tetrahedra from each cuff side of $\gamma$, the dihedral angles at $\tilde{\gamma}$ cancel out, as along as we consider the same horoball for all ideal tetrahedra. Hence we are left with the edges coming from the bending.

On the other hand, $\ell(b'_t)$ over a region is obtained by integrating the horizontal length agains the measure induced by the change of bending cocycle. When $\gamma$ is elliptic the pleated surface is of finite type, which makes all the lengths infinite (they correspond to geodesic rays towards the fixed endpoint of $\tilde{\gamma}$). But the horoball of the previous paragraph corresponds to an horocycle in the pleated surface, where we now we can compute length of a segment from the horocycle, and multiply that by the corresponding bending. This sum will be well-defined by the same reasons as the Schl\"afli formula for ideal polyhedra, as long as we take matching horocycles at each side of $\gamma$, namely horocycles coming from the same horoball at the fixed endpoint of $\hat{\gamma}$. 

Putting all together, both $V'_t$ and $\frac12\ell(b'_t)$ have the same contribution on the rectangle $R^t_\gamma$, namely both contributions are equal to 
\[\sum_{e \text{ is an ideal edge in } \lambda_0} \frac12\ell(e).\theta'(e)\]
where $\ell$ denotes length of the segment of the edge $e$ in $R^t_\gamma$ outside a fixed horoball based at the endpoint of $\tilde{\gamma}$, and $\theta(e)$ is the exterior dihedral angle. With these conventions we have justified
\[V'_t = \frac12\ell(b'_t)
\]
for all cases, which is sufficient for the purpose of this article, since what we need is to characterize the derivative of volume as depending only on the boundary information.

\section{Proof of Main theorem}\label{sec:mainproof}

Now that we have all terms defined, let us restate our main result.

\begin{theorem}\label{thm:main}
Let $M$ be a hyperbolizable compact 3-manifold with boundary. Let $\chi_0(M)$ be the connected irreducible component of the discrete and faithful representations. Then the map $i_*:\chi_0(M)\rightarrow \chi(\partial M)$ is a birational isomorphism onto its image.
\end{theorem}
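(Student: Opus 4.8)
The plan is to imitate Dunfield's argument, transferred to the convex co-compact setting by means of the tools assembled in Section~\ref{sec:background}. Write $X=\chi_0(M)$ and $Y=\overline{i_*(\chi_0(M))}$. As recalled in Subsection~\ref{subsec:character}, $i_*\colon X\to Y$ is a dominant regular map of irreducible complex varieties with $\dim_{\mathbb C}X=\dim_{\mathbb C}Y$, hence generically finite; since we are in characteristic zero there is a nonempty Zariski-open $V\subseteq Y$ over which $i_*$ restricts to a finite \'etale cover of degree $d=[\mathbb C(X):\mathbb C(Y)]$, so every fibre over $V$ has exactly $d$ points. To prove the theorem it suffices to show $d=1$: a generically injective dominant morphism in characteristic zero has a rational inverse (defined on $V$ by sending a point to its unique preimage), hence is birational onto its image. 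And to show $d=1$ it is enough to exhibit a Zariski-dense subset $\mathcal E\subseteq X$ with $i_*^{-1}(i_*(\rho))=\{\rho\}$ for every $\rho\in\mathcal E$, because then $\mathcal E$ meets the dense open set $i_*^{-1}(V)$, where fibres have $d$ points, forcing $d=1$.

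For $\mathcal E$ I would take the convex co-compact characters $\rho\in X$ that extend to a discrete faithful character $\hat\rho_0\colon\pi_1(O)\to\PSLC$ of a finite-volume hyperbolic orbifold $O$ containing $M$, where $W=\overline{O\setminus M}$ is a compact piece glued to $M$ along all of $\partial M$ (for instance $M$ as a $\pi_1$-injective submanifold of a closed hyperbolic manifold, or the result of attaching pieces with a torus boundary to the $\Sigma_\ell$ and performing hyperbolic Dehn filling). The first key point is that there are enough of these: they form a Zariski-dense subset of $\chi_0(M)$, exactly as the hyperbolic Dehn-filling characters do in the cusped cases treated by Dunfield and Klaff--Tillmann (finitely many points would never suffice, so some counting/Neumann--Zagier-type argument is needed here).

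Now fix $\rho\in\mathcal E$ with extension $\hat\rho_0$, and let $\rho'\in X$ be any character with $i_*(\rho')=i_*(\rho)$. For each $\ell$ the restrictions $\rho'|_{\pi_1\Sigma_\ell}$ and $\hat\rho_0|_{\pi_1\Sigma_\ell}$ are irreducible with equal characters, so after conjugation one can assemble a representation $\hat\rho'\colon\pi_1(O)\to\PSLC$ using the graph-of-groups decomposition $\pi_1 O=\pi_1 M *_{\sqcup_\ell\pi_1\Sigma_\ell}\pi_1 W$: put $\rho'$ on the $\pi_1 M$ vertex, $\hat\rho_0$ on the $\pi_1 W$ vertices, and absorb the edge conjugators into the amalgamation data. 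Then compute $\mathrm{vol}(\hat\rho')$ using a $\hat\rho'$-equivariant pseudo-developing map which over $M$ is a $3$-chain bounded by the $\Gamma$-pleated surface of $\rho'|_{\partial M}$ and over $W$ is the developing map of the hyperbolic structure on $W\subseteq O$. Since the pleated-surface volume of Subsection~\ref{subsec:volumevariation} depends only on the restriction to $\partial M$ — this is exactly where the extension of the Bonahon--Schl\"afli formula to pleated surfaces with elliptic cuffs is used, as $\rho'|_{\partial M}$ need not be loxodromic — the $M$-contribution for $\hat\rho'$ equals that for $\hat\rho_0$, while the $W$-contributions agree tautologically, giving
\[
\mathrm{vol}(\hat\rho')=\mathrm{vol}(\hat\rho_0)=\mathrm{vol}(O).
\]
By volume rigidity for finite-volume hyperbolic orbifolds (the Gromov--Thurston inequality together with its equality case, in the orbifold form used by Dunfield and Francaviglia), a representation of $\pi_1(O)$ realizing the volume $\mathrm{vol}(O)$ is discrete and faithful, hence conjugate to $\hat\rho_0$; restricting to $\pi_1 M$ yields $\rho'\sim\rho$. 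Thus $i_*^{-1}(i_*(\rho))=\{\rho\}$ for every $\rho\in\mathcal E$, which together with the first paragraph shows $i_*\colon\chi_0(M)\to\overline{i_*(\chi_0(M))}$ is a birational isomorphism.

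I expect the main obstacles to be twofold. The first is the volume bookkeeping itself: producing a boundary-determined notion of volume for $M$ and proving its additivity across $O=M\cup W$ when some cuffs of the pleated surface are elliptic and the surface is of finite type, so that lengths become infinite and one must work relative to matching horocycles; this is precisely the technical content of Subsection~\ref{subsec:volumevariation}. The second is establishing that the extendable characters $\mathcal E$ are Zariski dense in $\chi_0(M)$, i.e. that enough finite-volume orbifold extensions exist and are in sufficiently general position. The orbifold volume-rigidity theorem is classical but is an essential external input.
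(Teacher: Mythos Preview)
Your overall strategy matches the paper's: reduce to a generically $k$-to-$1$ map, choose a dense set of convex co-compact characters admitting cofinite extensions, transfer equal boundary characters to equal extended volumes, and invoke Gromov--Thurston rigidity. Two points need correction.

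The substantive gap is your claim that ``the pleated-surface volume of Subsection~\ref{subsec:volumevariation} depends only on the restriction to $\partial M$.'' What Subsection~\ref{subsec:volumevariation} establishes is that the \emph{derivative} $V'_t=\tfrac12\ell_t(b'_t)$ is determined by the peripheral character; it does \emph{not} follow that the volume itself factors through $i_*$. If $i_*\colon Z\to W$ is a $k$-sheeted cover and $d(\mathrm{vol}_\Gamma)$ is pulled back from $W$, the values of $\mathrm{vol}_\Gamma$ on the $k$ sheets can a~priori differ by constants, and that is exactly what you need to rule out before concluding $\mathrm{vol}(\hat\rho')=\mathrm{vol}(\hat\rho_0)$. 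The paper supplies this as Lemma~\ref{lemma:samevolume}: given $\rho_1,\rho_2$ in the same fibre, take a path in $Z$ from $\rho_1$ to $\rho_2$, project to a loop in $W$, and iterate lifts; along every lift the volume increment equals the fixed quantity $c=\mathrm{vol}_\Gamma(\rho_2)-\mathrm{vol}_\Gamma(\rho_1)$, and since the fibre is finite some concatenation of lifts closes up, forcing a positive multiple of $c$ to vanish. Note also that the paper sums over all orientations of $\Gamma$ to make $\mathrm{vol}_\Gamma$ globally smooth across the loci where some $\rho(\gamma_i)$ is elliptic; \emph{that}, rather than the factoring through $i_*$, is where the elliptic-cuff Schl\"afli analysis is actually consumed.

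For the density of $\mathcal E$ the paper does not use Dehn-filling constructions or Neumann--Zagier estimates; it invokes Brooks' theorem \cite[Theorem~1]{Brooks86} that convex co-compact structures on $M$ admitting a completion by an orthogonal system of mirror planes (Thurston's orbifold trick) are dense. This gives directly a compact reflection orbifold $\mathbb H^3/\tilde\rho(G)$ with $[G:\pi_1 M]<\infty$, and because the extension is built end-by-end from the peripheral data, any $\rho'$ with $i_*\rho'=i_*\rho$ extends to the same $G$ using the same reflecting planes. Your graph-of-groups assembly is the right idea, but you should anchor it to a concrete density statement rather than leave it as a hoped-for counting argument.
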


Since we want to use the Bonahon-Schl\"afli formula and volume rigidity, we first need a lemma saying that generically our pleated surface construction is well-defined.

\begin{lem}\label{lemma:pleatedreps}
Let $M$ be a hyperbolizable compact 3-manifold with boundary, and let $\lambda$ be a maximal geodesic lamination on $\partial M$ that contains a pants decomposition. Let $\chi_0(M)$ be the connected component of the discrete and faithful representations. Then the set 
\[\mathcal{P}(M, \lambda)=\lbrace [\rho]\in\chi_0(M)\,|\, (\partial M,\rho) \text{ has a pleated surface with pleating locus } \lambda  \rbrace\]

is an open Zariski dense set in $\chi_0(M)$.
\end{lem}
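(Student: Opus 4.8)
The plan is to identify $\mathcal{P}(M,\lambda)$ with an explicit open subset of $\chi_0(M)$ and then invoke the irreducibility of $\chi_0(M)$. Write $\partial M=\Sigma_1\sqcup\cdots\sqcup\Sigma_k$ and $\lambda=\lambda_1\sqcup\cdots\sqcup\lambda_k$, and fix in each $\lambda_\ell$ the pants decomposition $\Gamma_\ell$ it contains, so that the construction of Subsection \ref{subsec:pleated} applies to $(\Sigma_\ell,\lambda_\ell,\Gamma_\ell)$; since the condition defining $\mathcal{P}(M,\lambda)$ only involves $\rho|_{\pi_1(\partial M)}$ we may argue one boundary component at a time. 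Let $U\subseteq\chi_0(M)$ be the set of $[\rho]$ such that, for every $\ell$: (a) $\rho(\gamma)$ is loxodromic for each $\gamma\in\Gamma_\ell$; and (b) each of the finitely many complementary ideal triangles of $\lambda_\ell$ (taken up to the $\pi_1(\Sigma_\ell)$-action) has its three ideal vertices, which are attracting/repelling fixed points of explicit elements of $\pi_1(\Sigma_\ell)$, carried by $\rho$ to three distinct points of $\partial\mathbb{H}^3$. I claim $\mathcal{P}(M,\lambda)=U$.

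For the inclusion $U\subseteq\mathcal{P}(M,\lambda)$: given $[\rho]\in U$, run the construction of Subsection \ref{subsec:pleated} with $\zeta$ equal to the orientation of $\Gamma_\ell$ (chosen $\rho$-equivariantly). By (b) every complementary ideal triangle maps to a non-degenerate ideal triangle, by (a) the spiralling of step (4) converges, and the resulting map is the classical pleated surface realizing $\lambda_\ell$ (its pulled-back metric is hyperbolic since ideal triangles of $\mathbb{H}^3$ are isometric to the standard one and $\lambda_\ell$ has measure zero). For the reverse inclusion $\mathcal{P}(M,\lambda)\subseteq U$: if $(\Sigma_\ell,\rho)$ carries a pleated surface with pleating locus $\lambda_\ell$, then each $\gamma\in\Gamma_\ell$ is realized as a closed geodesic in the induced hyperbolic metric, and since a pleated surface is a path isometry its length equals the translation length of $\rho(\gamma)$, which is thus positive and $\rho(\gamma)$ loxodromic, giving (a); moreover the leaves and complementary triangles of $\lambda_\ell$ lift to genuine geodesics and ideal triangles of $\mathbb{H}^3$ whose vertices are pairwise distinct, because distinct leaves of a geodesic lamination, and distinct lifts of a single leaf, are never asymptotic, giving (b).

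Granting $\mathcal{P}(M,\lambda)=U$, the lemma follows. Openness: condition (a) says $\tau_\gamma([\rho])=(\tr\rho(\gamma))^2\in\mathbb{C}\setminus[0,4]$, the preimage of an open set under the regular (hence continuous) map $\tau_\gamma$; and condition (b), within the loxodromic locus, is the non-vanishing of finitely many regular functions of the shape $\tr[\rho(g),\rho(h)]-2$ (two loxodromics share a fixed point on $\partial\mathbb{H}^3$ if and only if the commutator of $\SLC$-lifts has trace $2$). Hence $U$ is a finite intersection of open sets. Non-emptiness: for $[\rho]$ convex co-compact, Ahlfors--Bers \cite{AhlforsBers60} makes each $\rho|_{\pi_1(\Sigma_\ell)}$ quasi-Fuchsian, so every nontrivial element is loxodromic and the axes of the finitely many relevant elements are pairwise non-asymptotic in the discrete group $\rho(\pi_1(\Sigma_\ell))$; equivalently, a quasi-Fuchsian group realizes every maximal lamination by a pleated surface. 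Finally $\chi_0(M)$ is irreducible, and a non-empty subset that is open in the classical topology is Zariski dense in an irreducible complex affine variety: were its Zariski closure a proper subvariety $V$, then $\chi_0(M)\setminus V$ would be a non-empty Zariski-open set, hence dense in the classical topology, yet disjoint from a non-empty open set, a contradiction. Therefore $\mathcal{P}(M,\lambda)=U$ is open and Zariski dense in $\chi_0(M)$.

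The step requiring the most care is the identification $\mathcal{P}(M,\lambda)=U$: for "$\supseteq$" one must check against Subsection \ref{subsec:pleated} that hypotheses (a)--(b) are exactly what make steps (1)--(4) output an honest classical pleated surface with pleating locus $\lambda_\ell$, rather than the degenerate finite-type object that arises when a cuff holonomy is elliptic; and for "$\subseteq$" one relies on the standard but non-formal fact that the leaves of a geodesic lamination realized by a pleated surface, together with the distinct lifts of each leaf, have pairwise distinct endpoints at infinity, which is what forces both the loxodromicity of the cuffs and the general position of the axes. The remaining points, namely the openness bookkeeping and the density argument, are routine.
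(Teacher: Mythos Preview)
Your argument follows the same overall strategy as the paper's: identify $\mathcal{P}(M,\lambda)$ with an open set cut out by trace conditions on the pants curves, check it contains the convex co-compact characters, and deduce Zariski density from the irreducibility of $\chi_0(M)$. The one substantive difference is that you require each $\rho(\gamma)$ to be \emph{loxodromic}, whereas the paper---consistently with its \emph{generalized} pleated surfaces of Subsection~\ref{subsec:pleated}, which are built whenever $\rho(\gamma)$ has a well-defined axis---only requires $\rho(\gamma)$ to be non-trivial and non-parabolic, i.e.\ $\tau_{\gamma}\neq 4$. This matters for the conclusion: the paper's two conditions $\tau_{\gamma_i}\neq 4$ and $\tau_{[\gamma_i,\gamma_j]}\neq 4$ are complements of hypersurfaces, so its $\mathcal{P}(M,\lambda)$ is \emph{Zariski} open; your condition~(a), $\tau_\gamma\notin[0,4]$, is only open in the classical topology, so your route yields Zariski density (via the irreducibility argument you give) but not Zariski openness. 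The stronger conclusion is what gets used downstream: in Lemma~\ref{lemma:samevolume} one intersects $\mathcal{P}(M,\lambda)$ with the Zariski-open sets from Lemma~\ref{lemma:k-to-1} and needs the resulting $Z$ to remain connected, which Zariski openness in an irreducible variety delivers automatically.

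Two smaller remarks. First, under the paper's definition your equality $\mathcal{P}(M,\lambda)=U$ fails (elliptic cuffs are allowed, so $\mathcal{P}(M,\lambda)\supsetneq U$); the paper in fact only argues the forward containment (conditions $\Rightarrow$ pleated surface exists), which is all that is needed. Second, your reverse inclusion leans on the claim that leaves of a realized lamination cannot become asymptotic in $\mathbb{H}^3$, which you rightly flag as non-formal; since the paper does not need that direction, you could simply drop it. The cleanest fix is to relax (a) to ``non-parabolic, non-trivial'' and then observe, as the paper does, that both defining conditions are the non-vanishing of finitely many regular functions.
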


\begin{proof} (Fixing a particular $\lambda$)

We follow the description of Thurston \cite{Thurston}. Let $S\subseteq \partial M$ be a connected component with some fixed hyperbolic structure. Fix then a pants decomposition $\mathcal{P}=\lbrace \gamma_i\rbrace$ and extend it by finitely many geodesic lines to a maximal geodesic lamination $\lambda$. Then a given $ [\rho]\in\chi_0(M)$ has a pleating surface with pleating locus $\lambda$ if:

\begin{enumerate}
    \item\label{1stcondition} $\rho(\gamma_i)$ is non-trivial and non-parabolic for all $\gamma_i\in\mathcal{P}$.
    \item\label{2ndcondition}The end points of $\rho(\gamma_i),\,\rho(\gamma_j)$ are distinct for any $\gamma_i\neq\gamma_j\in \mathcal{P}$.
\end{enumerate}

Indeed, if these conditions are satisfied, the lifts of $\mathcal{P}$ in $\tilde{\lambda}$ can be mapped equivariantly to $\mathbb{H}^3$ by choosing the geodesic representatives $\rho(\gamma_i)$. And since any line $\ell \in\lambda\setminus\mathcal{P}$ accumulates to $\gamma_i\neq\gamma_j\in \mathcal{P}$ with different endpoints, then we can map a lift of $\ell$ to the unique geodesic joining distinct endpoints of the lifts of $\gamma_i, \gamma_j$. Hence for any ideal triangle in $S\setminus\lambda$ we have a map of its boundary to $M$, so there exists a corresponding ideal triangle in $M$. Such ideal triangles will make the realization of $\lambda$ in $M$.

What is left to see is that (\ref{1stcondition}) and (\ref{2ndcondition}) contain a Zariski dense set. The negative of $(\ref{1stcondition})$ corresponds to $tr^2(\rho(\gamma_i))=4$, which is an polynomial equation on the coefficients of $\rho(\gamma_i)$. Similarly, if $\rho(\gamma_i),\rho(\gamma_j)$ share an endpoint then $tr^2(\rho(\gamma_i)\rho(\gamma_j)\rho(\gamma_i)^{-1}\rho(\gamma_j)^{-1})=4$, which is an algebraic equation on the coefficients of the commutator of $\rho(\gamma_i),\rho(\gamma_j)$. Finally, since these equations are not satisfied for convex co-compact representations, we have that the negatives of (\ref{1stcondition}), (\ref{2ndcondition}) are finite unions of proper algebraic sets, hence the intersection of their complements is a connected Zariski dense set in $\chi_0(M)$.

\end{proof}

Given that we have that the peripheral map $i_*:\chi_0(M) \rightarrow \overline{i_*(\chi_0(M))}\subset \chi_0(\partial M)$ is dominant, we can use the following lemma to show that $i_*$ is essentially a finite-to-1 covering. Then the main theorem will follow from showing that $i_*$ is essentially injective.

\begin{lem}[\cite{Harris92} Proposition 7.16 ]\label{lemma:k-to-1}
Let $X,Y$ be (complex) affine algebraic varieties, and let $f:X\rightarrow Y$ be a dominant rational map. If $X,Y$ have the same dimension, then there exist open Zariski dense subsets $X_0\subseteq X,\,Y_0\subseteq Y,\, f^{-1}(Y_0)=X_0$ and integer $k$ so that the map $f|_{X_0}:X_0\rightarrow Y_0$ is $k$-to-$1$.
\end{lem}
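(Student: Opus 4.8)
The plan is to work at the level of function fields and then spread the resulting picture out over a Zariski open subset of $Y$. In the present setting $X$ and $Y$ are irreducible affine algebraic sets, and $f$ dominant means it induces an embedding of fields of rational functions $f^*:\mathbb C(Y)\hookrightarrow\mathbb C(X)$. Because $\dim X=\dim Y$, these two fields have the same transcendence degree over $\mathbb C$, so $\mathbb C(X)$ is algebraic over $f^*\mathbb C(Y)$; as $\mathbb C(X)$ is finitely generated over $\mathbb C$, the extension $\mathbb C(X)/f^*\mathbb C(Y)$ is finite. Set $k:=[\,\mathbb C(X):f^*\mathbb C(Y)\,]$; this is the integer in the statement.

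Next I would choose a convenient model of this extension. Since $\operatorname{char}\mathbb C=0$ the extension is separable, so by the primitive element theorem $\mathbb C(X)=f^*\mathbb C(Y)(\theta)$ for some $\theta\in\mathbb C(X)$ with monic minimal polynomial
\[
m(T)=T^k+c_{k-1}T^{k-1}+\cdots+c_0\in f^*\mathbb C(Y)[T]
\]
over $f^*\mathbb C(Y)$; separability forces the discriminant $d:=\operatorname{disc}(m)\in f^*\mathbb C(Y)$ to be nonzero. Now I would spread this out: shrink $Y$ to a Zariski dense open $Y_0$ on which $f$ is a morphism, each $c_i$ and $d$ are regular functions, $d$ vanishes nowhere, and $\theta$ is a regular function on $X_0:=f^{-1}(Y_0)$; after inverting one further nonzero element of $\mathbb C[Y_0]$ one arranges that the natural $\mathbb C[Y_0]$-algebra homomorphism
\[
\mathbb C[Y_0][T]/(m(T))\longrightarrow \mathbb C[X_0],\qquad T\longmapsto\theta,
\]
is an isomorphism. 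This is possible because it becomes an isomorphism after tensoring with $\mathbb C(Y)$ over $\mathbb C[Y_0]$ — there it is just the identification $\mathbb C(Y)[T]/(m)\cong\mathbb C(X)$ — and an isomorphism of finitely generated algebras over $\mathbb C(Y)$ is already defined over $\mathbb C[Y_0]$ once finitely many denominators are inverted.

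With this model the fibre count is immediate. For a closed point $y\in Y_0$ with maximal ideal $\mathfrak m_y\subset\mathbb C[Y_0]$, the (scheme-theoretic) fibre $f^{-1}(y)$ has coordinate ring
\[
\mathbb C[X_0]/\mathfrak m_y\mathbb C[X_0]\ \cong\ \mathbb C[T]/\bigl(m_y(T)\bigr),
\]
where $m_y\in\mathbb C[T]$ comes from $m$ by evaluating the coefficients at $y$; it is monic of degree $k$ with discriminant $d(y)\neq 0$, hence splits into $k$ distinct linear factors over $\mathbb C$. Therefore $\#f^{-1}(y)=k$ for every $y\in Y_0$. Since $X_0=f^{-1}(Y_0)$ by construction, and $X_0,Y_0$ are nonempty Zariski open subsets of irreducible varieties hence dense, $f|_{X_0}:X_0\to Y_0$ is the asserted $k$-to-$1$ map.

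The main technical point is the spreading-out in the second paragraph: one has to verify that $\theta$ extends to a regular function on the \emph{entire} preimage $f^{-1}(Y_0)$, and that the birational identification $\mathbb C(X)=f^*\mathbb C(Y)(\theta)$ upgrades to a genuine isomorphism of coordinate rings over $Y_0$ rather than a mere birational map. Both are standard consequences of finite generation of $\mathbb C[X]$ over $\mathbb C$ together with clearing denominators, but they are precisely what turns the generic degree $k$ into an exact fibre cardinality.
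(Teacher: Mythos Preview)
The paper does not supply its own proof of this lemma: it is quoted as Proposition~7.16 of \cite{Harris92} and used as a black box. Your argument is correct and is essentially the standard proof one finds in Harris or Shafarevich: pass to function fields, use equality of transcendence degrees to get a finite extension of degree $k$, invoke the primitive element theorem (valid since $\operatorname{char}\mathbb C=0$), and spread the resulting monic polynomial out over a principal open of $Y$ where the discriminant is invertible.

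Two small points of phrasing are worth tightening. First, ``shrink $Y$ to a Zariski dense open $Y_0$ on which $f$ is a morphism'' is slightly imprecise, since the indeterminacy locus of a rational map lives in $X$, not $Y$; what you mean is to first replace $X$ by the domain of definition of $f$ (this is harmless here, and in the paper's application $i_*$ is already regular). Second, the step ensuring that $\theta$ is regular on \emph{all} of $f^{-1}(Y_0)$, not merely on some open of $X$, deserves a sentence: the pole locus of $\theta$ has dimension at most $\dim X-1=\dim Y-1$, so its image under $f$ has closure a proper subvariety of $Y$, and removing that closure from $Y_0$ does the job. You flag exactly this issue in your final paragraph, so the argument is complete; it would just read more smoothly if the justification were placed where the claim is made.
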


The idea to show that $k=1$ for $i_*$ is to use volume rigidity for characters. In order to do so, we need to show first that volume only depends on the peripheral data.

\begin{lem}\label{lemma:samevolume}
Let $M$ be a hyperbolizable compact 3-manifold with boundary, and let $\Gamma$ be an unoriented pants decomposition on $\partial M$. Let $\chi_0(M)$ be the connected component of the discrete and faithful representations. There exists an open Zariski dense subset $Z\subseteq \chi_0(M)$ so the following holds
\begin{enumerate}[label=(\alph*)]
    \item\label{volumedefine} $vol_\Gamma:Z\rightarrow \mathbb{R}$ given by
    \[vol_\Gamma(\rho) = \sum_{\Gamma' \text{ orientation on } \Gamma} vol_{\lambda_0(\Gamma')}(\rho)
    \]
    is well-defined.
    \item\label{boundaryvolume} On $W=i_*(Z)$ there is a well-defined map $V:W\rightarrow\mathbb{R}$ so the diagram commutes
        \[
            \begin{tikzcd}
            Z \arrow{r}{i_*} \arrow[swap]{dr}{vol_\Gamma} & W \arrow{d}{V} \\
            & \mathbb{R}
        \end{tikzcd}
        \]
\end{enumerate}
\end{lem}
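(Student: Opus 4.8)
The plan is to build the set $Z$ as a finite intersection of the Zariski-dense open sets supplied by Lemma \ref{lemma:pleatedreps}, and then to show that the resulting function $vol_\Gamma$ depends only on the peripheral character by exhibiting its derivative in that way via the Bonahon--Schl\"afli formula of Subsection \ref{subsec:volumevariation}. For part \ref{volumedefine}, given the unoriented pants decomposition $\Gamma$ on $\partial M$, each choice of orientation $\Gamma'$ on the curves of $\Gamma$ determines a maximal lamination $\lambda_0(\Gamma')$ extending it (as in Subsection \ref{subsec:pleated}), and Lemma \ref{lemma:pleatedreps} gives an open Zariski-dense set $\mathcal{P}(M,\lambda_0(\Gamma'))\subseteq\chi_0(M)$ on which the generalized pleated surface for that lamination is defined. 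I would set
\[
Z = \bigcap_{\Gamma' \text{ orientation on } \Gamma} \mathcal{P}(M, \lambda_0(\Gamma')),
\]
a finite intersection of Zariski-dense open sets, hence Zariski-dense and open, and nonempty since it contains all convex co-compact characters. On $Z$ each summand $vol_{\lambda_0(\Gamma')}(\rho)$ is the volume of a $3$-chain bounded by the corresponding pleated surface; I need to pin down this number unambiguously. The natural normalization is to fix, once and for all, a reference convex co-compact character $\rho_*\in\chi_0(M)$ and set $vol_{\lambda_0(\Gamma')}(\rho_*)$ equal to the convex-core volume (or any fixed finite value), then define $vol_{\lambda_0(\Gamma')}(\rho)$ by integrating the Bonahon--Schl\"afli variation $\tfrac12\ell_t(b'_t)$ along any smooth path in $Z$ from $\rho_*$ to $\rho$; since $Z$ is connected (a Zariski-dense open subset of the irreducible variety $\chi_0(M)$) such a path exists, and the integral is path-independent because the integrand is a closed $1$-form — this is exactly the content of Bonahon--Schl\"afli, that $d(\text{vol})$ equals the globally defined $1$-form $\rho\mapsto\tfrac12\ell_\rho(b'_\rho)$ on the space of $\Gamma$-adapted representations. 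With this, $vol_\Gamma(\rho)=\sum_{\Gamma'}vol_{\lambda_0(\Gamma')}(\rho)$ is a well-defined real-analytic function on $Z$, proving \ref{volumedefine}.

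For part \ref{boundaryvolume}, the key observation is that the Bonahon--Schl\"afli integrand depends only on the restriction of $\rho$ to $\pi_1(\partial M)$: the bending cocycle $b_\rho$ of the pleated surface $(\partial M,\rho)$, the induced hyperbolic metric $m_\rho$, and hence the length $\ell_\rho(b'_\rho)$ are all computed entirely from $\rho|_{\pi_1(\partial M)}$ (indeed from the pleated-surface data on each component $\Sigma_\ell$, as constructed in Subsection \ref{subsec:pleated} and analyzed in Subsection \ref{subsec:bendingcocycle}). Summing over the orientations $\Gamma'$, the total variation $d(vol_\Gamma)=\sum_{\Gamma'}\tfrac12\ell_\rho(b'_{\rho,\Gamma'})$ is therefore the pullback under $i_*$ of a $1$-form on $W=i_*(Z)$. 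Concretely: if $[\rho_1],[\rho_2]\in Z$ have $i_*([\rho_1])=i_*([\rho_2])$, I would connect $i_*([\rho_1])$ to $i_*([\rho_2])$ by a path in $W$ — after shrinking $Z$ so that $i_*|_Z$ is the $k$-to-$1$ covering of Lemma \ref{lemma:k-to-1}, $i_*|_Z$ is a local diffeomorphism, so the path lifts to paths from $[\rho_1]$ and from $[\rho_2]$, along each of which $vol_\Gamma$ changes by the integral of the same peripheral $1$-form; provided the starting values agree, the endpoint values agree. To make the starting values agree I normalize using that all convex co-compact characters have their convex-core volume determined by the conformal boundary (Ahlfors--Bers), and that the reference point $\rho_*$ can be chosen so that every sheet over $i_*([\rho_*])$ is convex co-compact — but in general there may be non-discrete points of $Z$ over $i_*([\rho_*])$, which is exactly where care is needed.

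Hence define $V:W\to\mathbb{R}$ by $V(w)=vol_\Gamma(\rho)$ for any $\rho\in Z$ with $i_*(\rho)=w$; the content of the lemma is that this is independent of the choice of preimage $\rho$. The main obstacle is precisely this well-definedness: showing that two preimages $\rho_1,\rho_2$ of the same $w$ have $vol_\Gamma(\rho_1)=vol_\Gamma(\rho_2)$. The derivative argument shows the difference $vol_\Gamma(\rho_1)-vol_\Gamma(\rho_2)$ is \emph{locally constant} on $W$ (its differential vanishes since $d(vol_\Gamma)$ is $i_*$-pulled-back), so it suffices to check it vanishes on one sheet near one point — and the clean way to do that is to arrange, by choosing $\Gamma$ and the normalization suitably, that near a convex co-compact $w_0$ all $k$ preimages are themselves convex co-compact, whereupon $vol_\Gamma$ on each is (a fixed multiple of, summed over orientations) the convex-core volume, which Ahlfors--Bers shows is a function of $w_0$ alone. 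Propagating this equality over all of the connected set $W$ via the local-constancy then gives \ref{boundaryvolume}. I expect the delicate points to be: (i) confirming that the Bonahon--Schl\"afli $1$-form is genuinely closed and globally defined on $Z$ (so that $vol_{\lambda_0(\Gamma')}$ is path-independent), which rests on the analysis already carried out in Subsection \ref{subsec:volumevariation} including the elliptic case, and (ii) the bookkeeping that the sum over all orientations $\Gamma'$ is needed precisely so that no choice of orientation is privileged and the total is symmetric, matching what the finite-volume arguments of Dunfield and Klaff--Tillmann do with a single cusped pleating locus.
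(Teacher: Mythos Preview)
Your construction of $Z$ and the treatment of part \ref{volumedefine} are essentially what the paper does, and your identification of the Bonahon--Schl\"afli integrand as peripheral data is the right starting point for \ref{boundaryvolume}. However, your argument for \ref{boundaryvolume} has a genuine gap at the normalization step.

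You propose to anchor the equality $vol_\Gamma(\rho_1)=vol_\Gamma(\rho_2)$ by finding a point $w_0\in W$ over which \emph{all} $k$ preimages are convex co-compact, then invoking Ahlfors--Bers. But this is impossible whenever $k>1$: if two distinct preimages $\rho,\rho'$ of $w_0$ were both convex co-compact, Ahlfors--Bers would force $\chi_\rho=\chi_{\rho'}$, contradicting distinctness. So over any convex co-compact boundary character there is exactly one convex co-compact preimage and $k-1$ non-discrete ones, and your normalization gives no information about $vol_\Gamma$ on those other sheets. You flag this yourself (``there may be non-discrete points of $Z$ over $i_*([\rho_*])$'') but then do not resolve it; the local-constancy of the sheetwise differences does not help without a base point where they all vanish, and the monodromy of the covering can permute sheets along loops in $W$, so even a single vanishing difference need not propagate.

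The paper sidesteps this entirely with a monodromy-style argument that requires no base-point normalization. Given $\rho_1,\rho_2\in Z$ with $i_*(\rho_1)=i_*(\rho_2)$, connect them by a path $\rho_t$ in $Z$; its image $i_*(\rho_t)$ is a loop in $W$. Because the Bonahon--Schl\"afli variation is peripheral, every lift of this loop through the $k$-to-$1$ map $i_*$ has the same volume change $\Delta=vol_\Gamma(\rho_2)-vol_\Gamma(\rho_1)$. Now concatenate: lift the loop starting at $\rho_2$ to reach some $\rho_3$ in the fiber, then lift again starting at $\rho_3$, and so on. Since the fiber is finite, some concatenation closes up to a loop in $Z$, along which the total volume change is both $0$ (it is a closed path for a well-defined function) and a positive integer multiple of $\Delta$. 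Hence $\Delta=0$. This is the missing idea: finiteness of the fiber forces a closed lift, and that closed lift kills the difference without ever needing to identify what the other preimages look like.
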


\begin{proof}
On the set $\mathcal{P}(M, \lambda_0(\Gamma))$ we can define $vol_{\lambda_0(\Gamma)}$ as the volume interior to the pleated surface with pleating locus $\lambda_0(\Gamma)$. By the arguments explained in Subsection \ref{subsec:volumevariation} this is a well-defined continuous function, although potentially non-differentiable. The reason for this is because we require a smooth equivariant family of endpoints for the lifts of $\Gamma$ in order to apply the Bonahon-Schl\"afli formula. As explained in Subsection \ref{subsec:pleated}, this is equivalent to choose an orientation $\Gamma'$ of $\Gamma$. Because we have to consider the case when $\rho(\gamma)$ is elliptic for some $\gamma\in \Gamma$, we cannot in principle choose a global smooth equivariant family of endpoints for the lifts of $\Gamma$. Instead, we choose all possible orientations at once and take the sum to obtain the smooth function $vol_\Gamma$, whose derivative is given by the sum of Bonahon-Schl\"afli formulas for each orientation. 

By intersecting with the Zariski dense sets of Lemma \ref{lemma:k-to-1} we can assume that we have $Z\subseteq \chi_0(M),\, W=i_*(Z)\subseteq \chi_0(\partial M)$ so that \ref{volumedefine} is satisfied and $i_*:Z\rightarrow W$ is $k$-to-$1$. Our goal is to show that for any $\rho_1, \rho_2 \in Z$ with $i_*(\rho_1)=i_*(\rho_2)$ we have that $vol_\Gamma(\rho_1)=vol_\Gamma(\rho_2)$. Since $Z$ is connected, we can take a path $\rho_t$ in $Z$ with endpoints $\rho_1,\rho_2$. Observe that since the Bonahon-Schl\"afli formula depends exclusively on peripheral information, the change of volume $vol_\Gamma$ on any lift of $i_*(\rho_t)$ through $i_*:Z\rightarrow W$ is always equal to $vol_\Gamma(\rho_2)-vol_\Gamma(\rho_1)$. Concatenate then consecutive lifts. Since the we have a finite fiber, these consecutive lifts must contain a close loop. Then the change of volume on that close loop is equal to $0$, but it is also a multiple of $vol_\Gamma(\rho_2)-vol_\Gamma(\rho_1)$. Then we must have that $vol_\Gamma(\rho_2)=vol_\Gamma(\rho_1)$, from where (\ref{boundaryvolume}) follows.






\end{proof}

Now we are ready to prove the Main theorem through volume rigidity.

\begin{proof}[Proof of Main Theorem]

By Lemmas \ref{lemma:pleatedreps}, \ref{lemma:k-to-1}, \ref{lemma:samevolume} we have Zariski open subsets $Z\subseteq \chi_0(M)$, $W\subseteq\chi_0(\partial M)$ so that $i_*:Z\rightarrow W$ is a $k$-to-$1$ map and $vol_\Gamma$ is constant over the fibers of $i_*$. By a result of Brooks \cite[Theorem 1]{Brooks86}, there is a dense set $E\subset Z$ of convex co-compact characters that admit a co-compact extension by reflections. This extension, known also as the Thurston orbifold trick, is a co-finite extension made by considering system of orthogonal planes on each geometrically finite end and extend by their reflections.

Take then $\chi_\rho\in E$ and $\chi_{\rho'}\in Z$ so that $i_*(\chi_\rho)=i_*(\chi_{\rho'})$. Then there exists $G > \pi_1(M)$ with $[G:\pi_1(M)]$ finite and $\tilde{\rho}\in R(G)$ so that $\tilde{\rho}|_{\pi_1(M)} =\rho$ and $\mathbb{H}^3/\tilde{\rho}(G)$ is a compact hyperbolic $3$-manifold. Since $i_*(\rho') = i_*(\rho)$, we can find extension $\tilde{\rho'}\in R(G)$ of $\rho'$. This is because $\rho'$ coincides (up to conjugation) as a representation with $\rho$ in each end, and the extension was made by reflecting on the system of orthogonal planes. And since $vol_\Gamma(\rho)= vol_\Gamma(\rho')$, then the volume of the complements of the reflecting planes in $\rho, \rho'$ are the same. This is because the defects between any two summmands of $vol_\Gamma$ or between the system of orthogonal planes and a summand of $vol_\Gamma$ are determined by the representation of $\pi_1(\partial M)$, where $\rho$ and $\rho'$ coincide. But then this implies that the representations $\tilde{\rho}, \tilde{\rho'}$ have the same volume. As $\tilde{\rho}$ corresponds to a compact hyperbolic $3$-manifold, volume rigidity for compact hyperbolic groups (Gromov-Thurston-Goldman volume rigidity, see \cite{Dunfield99}[Theorem 6.1]) implies that $\tilde{\rho}, \tilde{\rho'}$ are conjugated. Then it follows that $\chi_\rho=\chi_{\rho'}$.

Hence the map $i_*:Z\rightarrow W$ is $1$-to-$1$ in $E\subset Z$, so it has to be that $k=1$. This implies that $i_*$ is $1$-to-$1$, and since we knew that $i_*$ was dominant, then $i_*$ is a birrational isomorphism (see the remark after the definition of birational map on p. 77 and Exercise 7.8 of \cite{Harris92}; this fact uses that we are working over charactertic $0$).

\end{proof}

\textbf{Remarks}
\begin{enumerate}\label{remarks}
    \item In the case when $M$ is small we can say more about the map $i_*:\chi_0(M)\rightarrow \chi(\partial M)$. A $3$-manifold $M$ is \textit{small} if there does not exist incompressible, non-boundary parallel surface $\Sigma\subset M$. In this case we have  that the map $i_*:\chi_0(M)\rightarrow \chi(\partial M)$ is a surjective, finite-to-one map. For if there is a character $\rho\in\chi(\partial M)$ with non-zero dimensional preimage or in the accumulation of the image of $i_*$, then there is an ideal point $p$ on $\chi_0(M)$ so that $i_*(p)=\rho$. By Culler-Shalen theory (done by Boyer-Zhang \cite{BoyerZhang98} for the $\PSLC$ case) meromorphic valuation at the ideal point $p$ produces a $\pi_1(M)$ action on a tree, from where an incompressible surface $\Sigma$ is produced. Since the vertices of the tree are taken by classes of valuation lattices on the field of meromorphic functions times itself, the fact $i_*(p)=\rho$ is well-defined implies that the boundary of $M$ acts trivially on such tree, so $\Sigma$ can't be boundary parallel.
    
    \item We can combine our approach and the work of \cite{Dunfield99}, \cite{KlaffTillmann} to obtain a similar statement for $M$ geometrically finite. Let $(M,\mathcal{C})$ be a geometrically finite hyperbolic $3$-manifold, where $\mathcal{C}$ is the collection of conjugacy classes in $\partial M$ corresponding to the rank-$1$ cusps. Denote by $\partial_\mathcal{C}M$ the boundary of $M$ after pinching the generators of $\mathcal{C}$. Then we can define the representation and character varieties $ R(M,\mathcal{C}),\chi(M,\mathcal{C})$ as the subvarieties of $R(M), \chi(M)$ restricted to the condition that $\mathcal{C}$ are always mapped to parabolic elements in $\PSLC$. Taking $\chi_0(M,\mathcal{C})$ as the irreducible component containing geometrically finite characters pinched at $\mathcal{C}$, then the map
    \[i_*:\chi_0(M,\mathcal{C}) \rightarrow \chi(\partial_\mathcal{C}M)
    \]
    is a birational isomorphism with is image. Rank-$2$ cusps are dealt as in \cite{Dunfield99}, \cite{KlaffTillmann}, while for our pleated surface construction we extend generators of $\mathcal{C}$ to a pants decomposition of $\partial M$. Then the choice of endpoint of ideal triangles at a lift of an element of $\mathcal{C}$ is given by the unique parabolic fixed point.
\end{enumerate}

\bibliographystyle{amsalpha}
\bibliography{mybib}

\end{document}